\renewcommand{\Re}{\operatorname{Re}}
\newtheorem{thm}{Theorem}[section] %
\newtheorem{lem}[thm]{Lemma}
\newtheorem{prop}[thm]{Proposition}
\newtheorem{defn}{Definition}[section] %
\newtheorem{exmp}{Example}[section] %
\newtheorem{rmk}{Remark}[section] %
\numberwithin{equation}{section}
\def \R{{\mathbb{R}}}
\def \P {\Phi(x)}
\def \o {\omega(x)}
\def \japxi{\langle \xi \rangle}
\def \japxik{\langle \xi \rangle_k}
\def \japx{\langle x \rangle}
\def \hyp{Z_{ext}(N)}
\def \pd{Z_{int}(N)}
\def \J{[0,T] \times \R^n \times \R^n}
\def \la{\langle}
\def \ra{\rangle}
\def \ran{\rangle_k}
\def \rak{\rangle_k}
\def \wm{(\omega,g_{\Phi,k})}
\def \wmt{(\omega^{\tilde m_2}\Phi^{\varepsilon},g_{\Phi,k}^{(1,\delta),(1-\delta,0)})}
\def \wmtt{(\Phi^{\varepsilon'},g_{\Phi,k}^{(1,\delta),(1-\delta,0)})}
\providecommand{\keywords}[1]
{
	\small	
	\textbf{\text{Keywords:}} #1
}
\providecommand{\subclass}[1]
{
	\small	
	\textbf{\text{MSC(2010):}} #1
}
\title{\Large Strictly Hyperbolic Cauchy Problems on $\R^n$ with  \\Unbounded and Singular Coefficients\thanks{Authors dedicate this work to Bhagawan Sri Sathya Sai Baba.}}
\author{\normalsize Rahul Raju Pattar\thanks{rahulrajupattar@sssihl.edu.in (Corresponding Author)} , N. Uday Kiran\thanks{nudaykiran@sssihl.edu.in}  \\
	\small Department of Mathematics and Computer Science\\
	\small Sri Sathya Sai Institute of Higher Learning, Puttaparthi, India \\
}
\date{}
\begin{document} 
	
	\maketitle
	
	\begin{abstract}
		We investigate the behavior of the solutions of a class of certain strictly hyperbolic equations defined on $(0,T]\times \R^n$ in relation to a class of metrics on the phase space. In particular, we study the global regularity and decay issues of the solution to an equation with coefficients polynomially bound in $x$ with their $x$-derivatives and $t$-derivative of order $\textnormal{O}(t^{-\delta}),\delta \in [0,1),$ and $\textnormal{O}(t^{-1})$ respectively. This type of singular behavior allows coefficients to be either oscillatory or logarithmically bounded at $t=0$. We use the Planck function associated with the metric to subdivide the extended phase space and define an appropriate generalized parameter dependent symbol class. We report that the solution not only experiences a finite loss of derivatives but also a decay in relation to the initial datum defined in a Sobolev space tailored to the metric. Our analysis suggests that an infinite loss is quite expected when the order of singularity of the first time derivative of the leading coefficients exceeds $O(t^{-1})$. We confirm this by providing counterexamples. Further, using the $L^1$ integrability of the logarithmic singularity in $t$ and the global properties of the operator with respect to $x$, we derive the anisotropic cone conditions in our setting.\\
		\\	
		\keywords{Loss of Regularity $\cdot$ Strictly Hyperbolic Operator with non-regular Coefficients $\cdot$ Logarithmic Singularity $\cdot$ Global Well-posedness $\cdot$ Metric on the Phase Space  $\cdot$ Pseudodifferential Operators}\\
		\subclass{35L30 $\cdot$ 35S05 $\cdot$ 35B65 $\cdot$ 35B30}
	\end{abstract}
	
	
	\section{Introduction}
	A study of loss of derivatives in relation to the initial datum plays a prominent role in the modern theory of hyperbolic equations. It is now well established that the loss is caused due to either an irregularity, a singularity, or a degeneracy of the coefficients with respect to the time variable (see for instance \cite{CSK,Cico1,Cico2,hiro,GG,KuboReis,CicoLor,LorReis,Petkov}, and the references therein). Recently, many authors have also considered the interplay between the irregularities in time and suitable bounds on the space (see for instance \cite{AscaCappi1,AscaCappi2,NUKCori1,Rahul_NUK2}).   
	
	In this work, our main interest is on the optimality of loss when the coefficients are singular in time and unbounded in space. In particular, our interest is either infinitely many oscillations or logarithmic blow-up in $t$, and polynomially bounded on $x$. In order to study the influence of unboundedness in space one needs to consider an appropriate metric on the phase space \cite{Lern, nicRodi}. 
	
	The notion of a metric on phase space $T^{*}\R^{n}\ (\cong \R^{2n})$ was first introduced by H\"ormander \cite{Horm} who studied the smooth functions $p(x,\xi)$ called symbols, using the metric $\japxi^{2\delta} |dx|^2+ \japxi^{-2\rho} |d\xi|^2$, $0\leq \delta<\rho\leq 1$. That is, the symbol $p(x,\xi)$ satisfies, for some $m \in \R \text{ and } C_{\alpha\beta}>0$,
	\begin{linenomath*}
		\[
		|\partial_\xi^\alpha \partial_x^\beta p(x,\xi)| \leq C_{\alpha\beta} \japxi^{m-\rho|\alpha|+\delta|\beta|}, 
		\]
	\end{linenomath*}
	where $\alpha,\beta \in \mathbb{N}_0^n\;(\mathbb{N}_0=\mathbb{N} \cup \{0\})$ are multi-indices and $\japxi=(1+|\xi|^2)^{1/2}$. In a more general framework created by Beals and Fefferman \cite{Feff,nicRodi}, we can consider the symbol $p(x,\xi)$ that satisfies the estimate for some $C_{\alpha\beta}>0,$
	\begin{linenomath*}
		\begin{equation*}		
			|\partial_\xi^\alpha \partial_x^\beta p(x,\xi)| \leq C_{\alpha\beta} M(x,\xi) \Psi(x,\xi)^{-|\alpha|} \Phi(x,\xi)^{-|\beta|},
		\end{equation*}
	\end{linenomath*}
	where the positive functions $M(x,\xi), \Psi(x,\xi)$ and $\Phi(x,\xi)$ are specially chosen. In such a case, as given in \cite{Lern}, we can consider the following Riemannian structure on the phase space 
	\begin{linenomath*}
		\begin{equation}
			\label{beals_fefferman}
			g_{x,\xi}=\frac{|dx|^2}{\Phi(x,\xi)^2}+\frac{|d\xi|^2}{\Psi(x,\xi)^2};
		\end{equation}
	\end{linenomath*}
	thus, one has geometric restrictions on $\Phi(x,\xi)$ and $\Psi(x,\xi)$ of both Riemannian type and Symplectic type \cite{Lern}. 
	
	In this paper, we consider a metric (\ref{beals_fefferman}) with $\Phi(x,\xi)=\Phi(x)$ and $\Psi(x,\xi)=\japxik=(k^2+|\xi|^2)^{1/2}$, for a large positive parameter $k$, and the weight function $M(x,\xi)=\o^{m_2}\japxik^{m_1}$ for $m_1,m_2 \in \R.$ Here the functions $\o$ and $\P$ are positive monotone increasing in $|x|$ such that $1\leq \o \leq \P\lesssim \japx$ ($\japx := (1+\vert x\vert^2)^{1/2}$). Further, note that $\omega(x)$ and $\Phi(x)$ are associated with the weight and metric respectively, and they specify the structure of the differential equation in space variable. These functions will be discussed in detail in Section \ref{metric}.  
	
	Our interest is to study the behavior of solutions corresponding to a second order linear hyperbolic equation with coefficients that are polynomially growing $x$ and singular at $t=0$. Such equations have a well-known behavior of a loss of derivatives when the coefficients are bounded in space ($\R^n$) together with all their derivatives (see for example \cite{CSK,Cico1}). Along with the extension of the results to a global setting ($x \in \R^n$ and the coefficients are allowed to grow polynomially in $x$), we also investigate the behavior at infinity of the solution in relation to the coefficients. By a loss of regularity of the solution in relation to the initial datum we mean a change of indices in an appropriate Sobolev space associated with the metric (\ref{beals_fefferman}).
	 
	As a prototype of our model operator (see Section \ref{stmt}), consider the Cauchy problem:
		\begin{equation}
			\begin{rcases}
				\label{PO}
				\partial_t^2u - a(t,x)\Delta_xu = 0,  \quad (t,x) \in (0,T] \times \R^n, \\
				u(0,x)=u_1(x), \quad \partial_tu(0,x)=u_2(x),
			\end{rcases}
		\end{equation}
	where the coefficient $a(t,x)$ is in $C^1((0,T];C^\infty(\R^n))$ and satisfies the following estimates
	\begin{align}
	    \label{ell}
		a(t,x) & \geq C_{0} \o^2,\\ 
		\label{sing}
		|\partial_x^\beta\partial_t a(t,x)| & \leq C_{\beta} \frac{1}{t}  \left( \log \left(1+\frac{1}{t}\right) \right)^{|\beta|}\o^2\P^{-|\beta|}
	\end{align}
	where $\beta \in \mathbb{N}_0^n$, $C,C_{0},C_{\beta}>0$. From (\ref{sing}) with $|\beta| =0$, we have
	\begin{linenomath*}
	\[
	    \vert a(T,x) - a(t,x) \vert \leq \int_t^T \vert \partial_s a(s,x) ds \vert \leq C \log \left( \frac{T}{t} \right) \o^2.
	\]
	\end{linenomath*}
	Implying  
	\begin{equation}\label{logblow}
	    \vert a(t,x) \vert \leq C  \log \left( 1+\frac{1}{t} \right)\o^2.
	\end{equation}
	Thus, the coefficient $a(t,x)$ has a logarithmic blow-up at $t=0.$ 
	\begin{rmk}\label{rmk}
	   The singular behavior quantified by (\ref{sing}) is a generalization of Cicognani \cite{Cico1} to a global setting. Our conditions (\ref{sing}) confirm with the conditions on the coefficients given in \cite{Cico1} for $\o=\P=1$. Moreover, we can also replace (\ref{sing}) with a slightly general estimates, given in \cite{Cico1}, 
	\begin{equation}\label{sing2}
	    \begin{rcases}
	    \begin{aligned}
	        |\partial_x^\beta a(t,x)| & \leq C_{\beta} \frac{1}{t^{\delta_1}} \o^2\P^{-|\beta|}, \quad |\beta| >0,\\
			|\partial_x^\beta\partial_t a(t,x)| & \leq C_{\beta} \frac{1}{t^{1+\delta_2|\beta|}} \o^2\P^{-|\beta|}, \quad |\beta| \geq 0,
	    \end{aligned}
	    \end{rcases} \tag{\ref{sing}${}^*$}
	\end{equation}
	for $\delta_1,\delta_2 \in [0,1).$
	\end{rmk}
	An example of a coefficient $a(t,x)$ satisfying (\ref{ell}) and (\ref{sing2}) is given below.
	
	\begin{exmp}
		Let $n=1$, $T=1$, $\kappa_1 \in[0,1]$ and $\kappa_2\in(0,1]$ such that $\kappa_1 \leq \kappa_2$. Then, 
		\begin{linenomath*}
		$$
			a(t,x) = \japx^{2\kappa_1} \left(2+\sin \left( \japx^{1-\kappa_2} + \cos x \; \log t\right) + \left( 2+\cos  \japx^{1-\kappa_2} \right)\log(1+1/t)\right)
		$$
		\end{linenomath*}
		satisfies the estimates (\ref{sing2}) for $\o = \japx^{\kappa_1}$, $\P = \japx^{\kappa_2}$ and for any $\delta_1,\delta_2 \in (0,1)$.
	\end{exmp}
	As evident from (\ref{sing2}), we consider coefficients that are singular at $t=0$. Study of these equations is motivated by a behavioral study of blow-up solutions in quasilinear problems, see for example \cite[Section 4]{hiro}.
	In \cite{Cico1}, Cicognani discussed the well-posedness of (\ref{PO}) for the case $\o=\P=1$ in (\ref{ell}) and (\ref{sing2}). The author reports well-posedness in $C^\infty(\R^n)$ for 
	the Cauchy problem (\ref{PO}) with a finite loss of derivatives.
	
	In \cite{NUKCori1}, the second author along with Coriasco and Battisti extended the work of Kubo and Reissig \cite{KuboReis} to the SG setting, using the generalized parameter dependent Fourier integral calculus. The authors consider a weakly hyperbolic Cauchy problem with 
	\begin{linenomath*}
		\[
		\vert \partial_t^l \partial_x^\beta a(t,x)\vert \leq C_{l,\beta} \bigg(\frac{1}{t}\bigg(\ln \frac{1}{t}\bigg)^\gamma\bigg)^l \japx^{2-\vert \beta \vert}, 
		\]
	\end{linenomath*}
	where $(t,x) \in (0,T] \times \R^n, \; 0 \leq \gamma \leq 1,$ for all $l\in \mathbb{N}\textnormal{ and }\beta$ multi-index, for the model problem (\ref{PO}). In \cite{Rahul_NUK2}, we considered $m^{th}$ order equation and extended the results of Cicognani \cite{Cico1} to a global setting with respect to the metric (\ref{m1}). Here we assumed that the singularity on the first $t$-derivative of the leading coefficients is of order $q$, for $1\leq q <\frac{3}{2}$. Nevertheless, the coefficients in both the works \cite{NUKCori1,Rahul_NUK2} are continuous at $t=0$ and are bounded in time variable.
	
	We allow the coefficients to logarithmically blow-up near $t=0$ for the case $q=1$ as evident from the conditions (\ref{sing}) and (\ref{logblow}) and consider the Cauchy problem (\ref{PO}), where $a(t,x)$ satisfies the estimates in (\ref{ell}) and (\ref{sing2}). Further, we study the behavior at infinity of the solutions to the Cauchy problem by considering the metrics
	\begin{equation}\label{m1}
		g_{\Phi,k} = \P^{-2} \vert dx\vert^2 + \japxik^{-2}\vert d\xi\vert^2,
	\end{equation}  
	where $\japxik : = (k+\vert \xi\vert^2 )^{1/2}$ for a large parameter $k\geq1.$ Our methodology relies on using the Planck function (see Section \ref{metric} for the definition) associated to the metric $g_{\Phi,k}$ to subdivide the extended phase space (see Section \ref{zones}) and define appropriate generalized parameter dependent symbol classes. As one expects, we report that the solution not only experiences a loss of derivatives but also a decay in relation to the initial datum defined in a Sobolev space $H^{s}_{\Phi,k}(\R^n)$ tailored to the metric $g_{\Phi,k}$ (discussed in the Section \ref{metric}). 

	In Section \ref{cone}, we derive an optimal cone condition for the solution of the Cauchy problem (\ref{eq1}) from the energy estimate used in proving the well-posedness. Though the characteristics of the operator in (\ref{PO}) are singular in nature, the $L^1$ integrability of logarithmic singularity guarantees that the propagation speed is finite. The weight function governing the coefficients also influences the geometry of the slope of the cone in such a manner that the slope grows as $|x|$ grows.
	
    The methodology used in Section \ref{Proof1} suggests that an infinite loss in both decay and derivatives is quite expected phenomenon when the order of singularity of the first time derivative of the leading coefficients exceeds $O(t^{-1})$. In Section \ref{CE}, we justify this by providing. In fact, by extending the work of Ghisi and Gobbino \cite[Section 4]{GG} to a global setting, we show that the set of coefficients which result in infinite loss of regularity is nonempty and residual in certain metric space. The ideas here rely on  the spectral theorem for pseudodifferential operators on $\R^n$ and Baire category arguments.

	\section{Tools}
	
	In this section we introduce the main tools which we need for our analysis. The first part is devoted to a class of metrics on the phase space that govern the geometry of the symbols in our consideration. In the second part we device a localization technique based on the Planck function associated to the metric.
	
	\subsection{Our Choice of Metric on the Phase Space}\label{metric}
	In general, we use the metrics of the form
	\begin{equation}\label{m2}
		g_{\Phi,k}^{\rho,r} = \left( \frac{\japxik^{\rho_2}}{\P^{r_1}} \right)^2 |dx|^2 +  \left( \frac{\P^{r_2}}{\japxik^{\rho_1}} \right)^2 |d\xi|^2.
	\end{equation}
	Here $\rho=(\rho_1,\rho_2)$ , $r=(r_1,r_2)$ for $\rho_j,r_j \in [0,1]$, $j=1,2$ are such that $0 \leq \rho_2<\rho_1 \leq 1$ and $0 \leq r_2<r_1 \leq 1$. In this section we discuss the structure functions $\Phi$ and $\omega$. 
	
	Let us start by reviewing some notation and terminology used in the study of metrics on the phase space, see \cite[Chapter 2]{Lern} for details. Let us denote by $\sigma(X,Y)$ the standard symplectic form on $T^*\R^n\cong \R^{2n}$: if $X=(x,\xi)$ and $Y=(y,\eta)$, then
	\begin{linenomath*}
	\[
		\sigma(X,Y)=\xi \cdot y - \eta \cdot x.
	\]	
	\end{linenomath*}
	We can identify $\sigma$ with the isomorphism of $\R^{2n}$ to $\R^{2n}$ such that $\sigma^*=-\sigma$, with the formula $\sigma(X,Y)= \langle \sigma X,Y\rangle$. Consider a Riemannian metric $g_X$ on $\R^{2n}$ (which is a measurable function of $X$) to which we associate the dual metric $g_X^\sigma$ by
	\begin{linenomath*}
	\[ 
	\forall T \in \R^{2n}, \quad g_X^\sigma(T)= \sup_{0 \neq T' \in \R^{2n}} \frac{\langle \sigma T,T'\rangle^2}{g_X(T')}.
	\]
	\end{linenomath*}
	
	Considering $g_X$ as a matrix associated to positive definite quadratic form on $\R^{2n}$, $g_X^\sigma=\sigma^*g_X^{-1}\sigma$.
	We define the Planck function \cite{nicRodi}, that plays a crucial role in the development of pseudodifferential calculus to be
	\begin{linenomath*}
	\[ 
		h_g(x,\xi) := \inf_{0\neq T \in \R^{2n}} \Bigg(\frac{g_X(T)}{g_X^\sigma(T)}\Bigg)^{1/2}.
	\]
	\end{linenomath*}
	 The uncertainty principle is quantified as the upper bound $h_g(x,\xi)\leq 1$. In the following, we often make use of the strong uncertainty principle, that is, for some $\kappa>0$, we have
	 \begin{linenomath*}
	 \[
	 	h_g(x,\xi) \leq (1+|x|+|\xi|)^{-\kappa}, \quad (x,\xi)\in \R^{2n}.
	 \]
	 \end{linenomath*}
	 The Planck function associated to the metric in (\ref{m2}) is given by $\P^{r_2-r_1} \japxik^{\rho_2-\rho_1}.$
	 
	 Basically, a pseudodifferential calculus is the datum of the metric satisfying some local and global conditions. In our case, it amounts to the conditions on $\P$. The symplectic structure and the uncertainty principle also play a natural role in the constraints imposed on $\Phi$. So we consider $\P$ to be a monotone increasing function of $|x|$ satisfying following conditions: 
	 \begin{linenomath*}
	\begin{alignat}{3}
		\label{sl}
		1 \; \leq & \quad \Phi(x) &&\lesssim  1+|x| && \quad \text{(sub-linear)} \\
		\label{sv}
		\vert x-y \vert \; \leq & \quad r\Phi(y) && \implies C^{-1}\Phi(y)\leq \Phi(x) \leq C \Phi(y)  && \quad \text{(slowly varying)} \\
		\label{tp}
		&\Phi(x+y) && \lesssim  \Phi(x)(1+|y|)^s && \quad \text{(temperate)}
	\end{alignat}
	\end{linenomath*}
	for all $x,y\in\R^n$ and for some $r,s,C>0$. Note that $C \geq 1$ in the slowly varying condition with $x=y$. 
	
	For the sake of calculations arising in the development of symbol calculus related to metrics $g_\Phi$ and $\tilde{g}_\Phi$, we need to impose following additional conditions:
	\begin{linenomath*}
	\begin{alignat}{3}
		\label{sa}
		|\Phi(x) - \Phi(y)| \leq & \Phi(x+y) && \leq \Phi(x) + \Phi(y),  && \quad  (\text{Subadditive})\\
		\label{Phi}
		& |\partial_x^\beta \Phi(x)| && \lesssim \Phi(x) \japx^{-|\beta|}, \\
		&\Phi(ax) &&\leq a\P, \text{ if } a>1,\\
		\label{scale}
		& a\P &&\leq \Phi(ax), \text{ if } a \in [0,1],
	\end{alignat}
	\end{linenomath*}
	where $\beta \in \mathbb{Z}_+^n$. It can be observed that the above conditions are quite natural in the context of symbol classes. In our work, we need even the weight function $\o$ to satisfy the properties (\ref{sl})-(\ref{scale}). In arriving at the energy estimate using the Sharp G\r{a}rding inequality (see Section \ref{energy} for details), we need 
	\begin{linenomath*}
	\[
		\o \leq \P, \quad x \in \R^n.
	\] 
	\end{linenomath*}
	The Sobolev spaces related to the metric $g_{\Phi,k}$ in (\ref{m1}) are defined below.
	\begin{defn} \label{Sobo}   
		The Sobolev space $H^{s}_{\Phi,k}(\R^n)$ for $s=(s_1,s_2) \in \R^2$ and $ k \geq 1,$ is defined as
			\begin{equation}
				\label{Sobo2}
				H^{s}_{\Phi,k}(\R^n) = \{v \in L^2(\R^n): \P^{s_2}\langle D \ran^{s_1}v \in L^2(\R^{n}) \},
			\end{equation}
		equipped with the norm
		$
		\Vert v \Vert_{\Phi,k;s} = \Vert \Phi(\cdot)^{s_2}\langle D \ran^{s_1}v \Vert_{L^2} .
		$ 
	\end{defn}
    The subscript $k$ in the notation $H^{s}_{\Phi,k}(\R^n)$ is related to the parameter in the operator $\la D \ra_k = (k^2 - \Delta_x)^{1/2}.$ When $k=1$, we denote the space $H^{s}_{\Phi,1}(\R^n)$ simply as $H^{s}_{\Phi}(\R^n).$
    
	\subsection{Subdivision of the Phase Space}\label{zones}
	One of the main tools in our analysis is the division of the extended phase space into two regions using the Planck function, $h(x,\xi)=(\P \japxik)^{-1}$ of the metric $g_{\Phi,k}$ in (\ref{m1}). We use these regions in the proof of Theorem \ref{result1} (see Section \ref{factr}) to handle the low regularity in $t$. To this end we define $t_{x,\xi}$, for a fixed $(x,\xi)$, as the solution to the equation
	\begin{linenomath*}
		\[
		t=\frac{N}{\P\japxik},
		\]
	\end{linenomath*}
	where $N$ is the positive constant. Using $t_{x,\xi}$ and the notation $J=\J$ we define the interior region
	\begin{linenomath*}
		\begin{equation} \label{zone1}
			\pd =\{(t,x,\xi)\in J : 0 \leq t \leq t_{x,\xi}\}
		\end{equation}
	\end{linenomath*}
	and the exterior region
	\begin{linenomath*}
		\begin{equation} \label{zone2}
			\hyp =\{(t,x,\xi)\in J : t_{x,\xi} < t \leq T\}.
		\end{equation}
	\end{linenomath*}	
	We use these regions to define the parameter dependent global symbol classes in Section \ref{Symbol classes}.

	\section{Statement of the Result}\label{stmt}
	Before we state the main result of this paper, we introduce our model strictly hyperbolic equation. We refer to Section \ref{Proof1} for the proof. 
	
	We deal with the Cauchy problem 
	\begin{linenomath*}
		\begin{equation}
		\begin{cases}
		\label{eq1}
		P(t,x,\partial_t,D_x)u(t,x)= f(t,x), \qquad D_x = -i\nabla_x,\;(t,x) \in (0,T] \times \R^n, \\
		u(0,x)=f_1(x), \quad \partial_tu(0,x)=f_2(x),
		\end{cases}
		\end{equation}
	\end{linenomath*}
	with the strictly hyperbolic operator $P(t,x,\partial_{t},D_{x}) = \partial_t^2 + a(t,x,D_x)+ b(t,x,D_x)$ where
	\begin{linenomath*}			
	\[
		a(t,x,\xi)  = \sum_{i,j=1}^{n} a_{i,j}(t,x)\xi_i\xi_j \quad \text{ and } \quad
		b(t,x,\xi)  = i\sum_{j=1}^{n} b_{j}(t,x)\xi_j + b_{n+1}(t,x).
	\]
	\end{linenomath*}
	Here, the matrix $(a_{i,j}(t,x))$ is real symmetric for all $(t,x)\in (0,T] \times \R^n$, $a_{i,j} \in C^1((0,T];C^\infty(\R^n))$ and $b_j \in C([0,T];C^\infty(\R^n))$. Similar to the estimates in Remark \ref{rmk}, we have the following assumptions on the operator $P$
	\begin{linenomath*}
			\begin{align}
			\label{elli}
			a(t,x,\xi) &\geq C_0 \o^2 \japxik^2, \quad C_0>0, \\
			\label{bound}
			\vert \partial_\xi^\alpha \partial_x^\beta a(t,x,\xi) \vert &\leq C_{\alpha\beta} \frac{1}{t^{\delta_1}} \o^2 \P^{-\vert \beta \vert}\japxik^{2-\vert \alpha \vert},\quad |\alpha| \geq 0, |\beta|>0, \\
			\label{B-up2}
			\vert \partial_\xi^\alpha \partial_x^\beta \partial_t a(t,x,\xi) \vert & \leq C_{\alpha\beta} \frac{1}{t^{1+\delta_2|\beta|}} \o^2 \P^{-\vert \beta \vert} \japxik^{2-\vert \alpha \vert},\quad |\alpha| \geq 0, |\beta|\geq 0,\\
			\label{Lower}
			\vert \partial_\xi^\alpha \partial_x^\beta b(t,x,\xi) \vert &\leq C_{\alpha\beta} \o \P^{-\vert \beta \vert}\japxik^{1-\vert \alpha \vert},
			\end{align}
	\end{linenomath*}		
	$\delta_1,\delta_2 \in \big[0,1\big), (t,x,\xi) \in [0,T] \times \R^n\times \R^n$. Note that $C_{\alpha\beta}(>0)$ is a generic constant which may vary from equation to equation.

	We now state the main result of this paper.
	Let $e=(1,1).$
	
	\begin{thm}\label{result1}
		Consider the strictly hyperbolic Cauchy problem  (\ref{eq1}) satisfying the conditions (\ref{elli}) - (\ref{Lower}). Let the initial data $f_j$ belong to $H^{s+(2-j)e}_{\Phi,k}$, $j=1,2$ and the right hand side $f \in C([0,T];H^{s}_{\Phi,k})$.
		Then, denoting $\delta=\max\{\delta_1,\delta_2\}$, for every $\varepsilon \in (0,1-\delta)$ there are $\kappa_0,\kappa_1 >0$ such that for every $s \in \R^2$ there exists a unique global solution
		\begin{linenomath*}
			\[
			u \in C\left([0,T];H^{s+(1-\Lambda(t))e}_{\Phi,k}\right)\bigcap C^{1}\left([0,T];H^{s-\Lambda(t)e}_{\Phi,k}\right),
			\]
		\end{linenomath*}
		where $\Lambda(t)=\kappa_0 + \kappa_1 t^\varepsilon/\varepsilon$. More specifically, the solution satisfies the a-priori estimate		
		\begin{linenomath*}
			\begin{equation}
			\begin{aligned}
			\label{est2}
			\sum_{j=0}^{1} \Vert \partial_t^ju(t,\cdot) \Vert_{\Phi,k;s+(1-j-\Lambda(t))e} \; &\leq C \Bigg(\sum_{j=1}^{2} \Vert f_j\Vert_{\Phi,k;s+(2-j)e} 
			+ \int_{0}^{t}\Vert f(\tau,\cdot)\Vert_{\Phi,k;s-\Lambda(\tau)e}\;d\tau\Bigg)
			\end{aligned}
			\end{equation}
		\end{linenomath*}
		for $0 \leq t \leq T, \; C=C_s>0$.				
	\end{thm}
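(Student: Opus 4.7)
The plan is to reduce the second-order problem (\ref{eq1}) to a $2 \times 2$ first-order pseudodifferential system, derive an $L^2$-based energy estimate for its symmetrization, and then transfer the estimate to the weighted Sobolev spaces $H^{s}_{\Phi,k}$ by conjugation. The loss function $\Lambda(t)=\kappa_0+\kappa_1 t^\varepsilon/\varepsilon$ has two ingredients: the constant $\kappa_0$ will arise from the symbol-class orders needed to diagonalize and symmetrize the system inside the calculus associated with $g_{\Phi,k}$, while the time-dependent part $\kappa_1 t^\varepsilon/\varepsilon$ will appear as the primitive of an $\varepsilon$-regularized singular bound $t^{-1+\varepsilon}$ produced by the zone decomposition of Section \ref{zones}.

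First, I would factorize $P$ modulo lower order terms as $P \equiv (\partial_t - i\lambda_+)(\partial_t - i\lambda_-)$, where $\lambda_\pm(t,x,\xi) = \pm\sqrt{a(t,x,\xi)}$ are the characteristic roots guaranteed by strict hyperbolicity (\ref{elli}). The estimates (\ref{bound})--(\ref{B-up2}) transfer to $\lambda_\pm$, but the singularity in $\partial_t a$ prevents $\lambda_\pm$ from lying in a fixed symbol class uniformly in $t \in (0,T]$. To bypass this, I would replace $\lambda_\pm$ by a regularized symbol $\lambda_\pm^{(N)}$ defined by freezing the coefficients at $t = t_{x,\xi}$ on the interior region $\pd$ (via the cut-off $\CHI$) and keeping them intact on the exterior region $\hyp$ (via $\CHIC$). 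The resulting $\lambda_\pm^{(N)}$ lie in the class $\GH$ tailored to $g_{\Phi,k}$, and the error $\lambda_\pm - \lambda_\pm^{(N)}$ is supported in $\pd$, where $t\,\P\japxik \leq N$, so that the singular time factor is converted into a spatial factor of bounded order.

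Next, I would set $U = (\la D\rangle_k \P u,\; \partial_t u)^{\top}$ so that the factorized equation becomes a first-order system $\partial_t U = K(t,x,D_x)\,U + F_0$, whose principal part is diagonalizable in $\GH$ up to lower order through a change of variables $V = M U$ with $M$ elliptic of order $(0,0)$. The Sharp G\r{a}rding inequality adapted to the metric $g_{\Phi,k}$ (whose applicability motivates the requirement $\o \leq \P$ imposed in Section \ref{metric}) then yields
\begin{equation*}
\tfrac{d}{dt}\,\|V(t)\|_{L^2}^2 \;\leq\; C\,\Psi(t)\,\|V(t)\|_{L^2}^2 \,+\, 2\,\|M F_0(t)\|_{L^2}\,\|V(t)\|_{L^2},
\end{equation*}
where $\Psi(t) \sim t^{-\delta_1} + t^{-1}$ collects the contributions of the lower order term $b$ and of the $\partial_t$-derivatives of the diagonalizer, controlled by (\ref{bound})--(\ref{Lower}). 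To absorb the non-integrable $t^{-1}$ term I would further conjugate by an operator-valued weight $e^{-\Lambda(t)\,\Theta}$, with $\Theta$ positive elliptic of order $e=(1,1)$ in $\GH$, so that $\Lambda'(t)\,\Theta$ dominates the singular part of $\Psi(t)$ modulo lower order; this accounts for the shift $(1-\Lambda(t))e$ in the Sobolev indices of the conclusion.

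The main obstacle will be the sharp control of the $t^{-1}$ bound after zone regularization. Specifically, on $\hyp$ one needs to exploit the inequality $t^{-1} \leq N^{\varepsilon}\,t^{-1+\varepsilon}\,(\P\japxik)^{\varepsilon}$, so that after integration in $t$ the primitive $N^{\varepsilon}\kappa_1 t^{\varepsilon}/\varepsilon$ couples with an operator of order $(\varepsilon,\varepsilon) \leq e$, making $\Lambda(t)\,\Theta$ a well-defined element of the calculus with the parameter $k$ chosen large enough to absorb the remainders. Once this bound is verified and the $L^2$ estimate on $V$ is in hand, the composition and continuity properties of $\GH$ transfer it to the $H^{s}_{\Phi,k}$-setting for arbitrary $s \in \R^2$, giving (\ref{est2}); existence then follows by a standard approximation argument with coefficients smoothed near $t=0$, and uniqueness by applying the same estimate to the adjoint problem integrated backwards in time.
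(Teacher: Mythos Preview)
Your overall architecture---zone decomposition, regularization of the characteristic roots, reduction to a first-order system, Sharp G\r{a}rding, and conjugation by a time-dependent weight---is the same as the paper's, and your reading of the inequality $t^{-1}\leq N^{\varepsilon}t^{-1+\varepsilon}(\P\japxik)^{\varepsilon}$ on $\hyp$ is exactly what drives the loss $\kappa_1 t^{\varepsilon}/\varepsilon$. Two points, however, need correction.

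\medskip
\textbf{The conjugating weight.} Taking $\Theta$ positive elliptic of order $e=(1,1)$ makes $e^{-\Lambda(t)\Theta}$ an \emph{infinite}-order operator (think of $e^{-c\,\Phi\la D\rak}$), not a Sobolev shift by $\Lambda(t)e$. What produces the index shift $(1-\Lambda(t))e$ in $H^{s}_{\Phi,k}$ is conjugation by $(1+\Phi\la D\rak)^{-\mu(t)}=e^{-\mu(t)\log(1+\Phi\la D\rak)}$, i.e.\ $\Theta=\log(1+\Phi\la D\rak)$, which is of order $(\varepsilon',\varepsilon')$ for every $\varepsilon'>0$, not $(1,1)$. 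With that choice, $\partial_t$ of the weight yields $\mu'(t)\log(1+\Phi\la D\rak)=\kappa_1 t^{-1+\varepsilon}\log(1+\Phi\la D\rak)$, and this is precisely what dominates the remainder after the zone trade-off, since the latter carries one factor of $\log(1+\P\japxik)$ rather than a full power $\P\japxik$.

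\medskip
\textbf{The origin of $\kappa_0$.} The constant $\kappa_0$ does not come from ``symbol-class orders needed to diagonalize'': it is the $L^1$-in-$t$ bound on the singularity in the interior zone. The paper performs a \emph{first} conjugation by $e^{-\int_0^t\tilde\psi(r,x,D_x)\,dr}$, where $\tilde\psi$ majorizes the full lower-order symbol $|\sigma(A_0)|+|\sigma(A_1)|$; the key fact (using the logarithmic blow-up (\ref{logblow}) and the zone width $t\lesssim(\P\japxik)^{-1}$) is
\[
\int_0^T \tilde\psi(t,x,\xi)\,dt \;\leq\; \kappa_0\,\log\bigl(1+\P\japxik\bigr),
\]
so $e^{\pm\int_0^t\tilde\psi}$ is a \emph{finite}-order operator of order $\kappa_0 e$. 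Only after this first conjugation does the remainder $R_1$ satisfy $t^{1-\varepsilon}(\log(1+\P\japxik))^{-1}\sigma(R_1)\in G^{0,0}$, and then the second conjugation by $(1+\Phi\la D\rak)^{-\kappa_1 t^{\varepsilon}/\varepsilon}$ absorbs it via Sharp G\r{a}rding. Your single conjugation, with the corrected logarithmic $\Theta$, would still need this two-step structure (or an equivalent) to separate the contribution of the integrated interior singularity ($\kappa_0$) from the exterior $t^{-1}\to t^{-1+\varepsilon}$ trade-off ($\kappa_1 t^{\varepsilon}/\varepsilon$).
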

	
	\section{Parameter Dependent Global Symbol Classes}	\label{Symbol classes}
	We now define certain parameter dependent global symbols that are associated with the study of the Cauchy problem (\ref{eq1}) which
	generalize the symbols given in \cite{Cico1} to a global setting. Let $m=(m_1,m_2)\in \mathbb{R}^2$. Consider the metric $g_{\Phi,k}^{\rho,r}$ as in (\ref{m2}).
	
	\begin{defn}
		$G^{m_1,m_2}(\omega,g_{\Phi,k}^{\rho,r})$ is the space of all functions $p=p(x,\xi) \in C^\infty(\mathbb{R}^{2n})$ satisfying 
		\begin{linenomath*}
		\begin{equation*}
		\label{sym1}
		\sup_{\alpha,\beta \in \mathbb{N}^n} \sup_{(x,\xi)\in \R^n}  \japxik^{-m_1+\rho_1|\alpha|-\rho_2|\beta|} \P^{-m_2+r_1|\beta|-r_2|\alpha|} |\partial_\xi^\alpha  D_x^\beta p(x,\xi)| < +\infty
		\end{equation*}	
		\end{linenomath*}		
	\end{defn}
	For the sake of simplicity, we denote the metric $g^{(1,0),(1,0)}_{\Phi,k}$ and the corresponding symbol class $G^{m_1,m_2}(\omega,g^{(1,0),(1,0)}_{\Phi,k})$ as $g_{\Phi,k}$ and $G^{m_1,m_2}(\omega,g_{\Phi,k})$, respectively.
	
	Observe that the derivatives of $\sqrt{a(t,x,\xi)}$ show enhanced singular behaviour compared to $a(t,x,\xi)$. Thus, to handle the singular behavior of the characteristics, we have the following symbol classes.
	\begin{defn}
		$G^{m_1,m_2}\{l_1,l_2;p\}_{int,N}(\omega,g_{\Phi,k})$ for $l_1,l_2\in \R$ and $p\in[0,1)$is the space of all $t$-dependent symbols $a=a(t,x,\xi)$ in $C^1((0,T];G^{m_1,m_2}(\omega,g_{\Phi,k}))$ satisfying 
		\begin{linenomath*}
		\begin{align*}
			\sup_{(t,x,\xi)\in \pd} \vert  \partial_\xi^\alpha a(t,x,\xi) \vert & \leq C_{00} \japxik^{m_1-|\alpha|} \o^{m_2} \bigg(\log\bigg(1+\frac{1}{t}\bigg)\bigg)^{l_1},\\
			\sup_{(t,x,\xi)\in \pd} \vert \partial_\xi^\alpha D_x^\beta a(t,x,\xi) \vert  &\leq C_{\alpha \beta} \japxik^{m_1-|\alpha|} \o^{m_2}\P^{-|\beta|} \left(\frac{1}{t}\right) ^{pl_2}
		\end{align*}
		\end{linenomath*}
		for some $C_{\alpha \beta}>0$ where $\alpha\in \mathbb{N}^n_0$ and $\beta \in \mathbb{N}^n.$ 
	\end{defn}	

	\begin{defn}
		$G^{m_1,m_2}\{l_1,l_2,l_3,l_4;p\}_{ext,N}(\omega,g_{\Phi,k})$ for $l_j\in \R,j=1,\dots,4$ and $p\in[0,1)$ is the space of all $t$-dependent symbols $a=a(t,x,\xi)$ in $C^1((0,T];G^{m_1,m_2}(\omega,g_{\Phi,k}))$ satisfying 
		\begin{linenomath*}
		\begin{align*}
			\sup_{(t,x,\xi)\in \hyp} \vert \partial_\xi^\alpha D_x^\beta a(t,x,\xi) \vert & \leq C_{\alpha \beta} \japxik^{m_1-|\alpha|} \o^{m_2}\P^{-|\beta|} \bigg(\frac{1}{t}\bigg)^{l_1+p(l_2+|\beta|)}  \\ & \qquad \times \bigg(\log\bigg(1+\frac{1}{t}\bigg)\bigg)^{l_3+l_4(|\alpha|+|\beta|)} 
		\end{align*}
		\end{linenomath*}
		for some $C_{\alpha \beta}>0$ where $\alpha,\beta \in \mathbb{N}^n_0.$ 
	\end{defn}	
	Given a $t$-dependent global symbol $a(t,x,\xi)$, we can associate a pseudodifferential operator $Op(a)=a(t,x,D_x)$ to $a(t,x,\xi)$ by the following oscillatory integral
	\begin{linenomath*}
	\begin{align*}
	a(t,x,D_x)u(t,x)& =\iint\limits_{\mathbb{R}^{2n}}e^{i(x-y)\cdot\xi}a(t,x,\xi){u}(t,y)dyd \xi\\
	& = (2\pi)^{-n}\int\limits_{\mathbb{R}^n}e^{ix\cdot\xi}a(t,x,\xi)\hat{u}(t,\xi) d\xi.
	\end{align*}
	\end{linenomath*}
	As for the calculus of symbol classes $G^{m_1,m_2}(\omega,g_{\Phi,k}^{\rho,r})$, we refer to \cite[Section 1.2 \& 3.1]{nicRodi} and \cite{CT}. The calculus for the operators with symbols in the additive form $a(t,x,\xi) = a_1(t,x,\xi) + a_2(t,x,\xi)$, for
	\begin{linenomath*}
	\[
		\begin{aligned}
			a_1 &\in G^{\tilde m_1,\tilde m_2}\{\tilde l_1,\tilde l_2;\delta_1\}_{int,N} \wm, \\
			a_2 &\in G^{m_1,m_2}\{l_1,l_2,l_3,l_4;\delta_2\}_{ext,N}\wm,
		\end{aligned}
	\]
	\end{linenomath*}
	is given in Appendix.
   \section{Proof of Theorem \ref{result1}} \label{Proof1}
	In this section, we give a proof of the main result. There are three key steps in the proof of Theorem \ref{result1}. First, we modify the coefficients of the principal part by performing an excision so that the resulting coefficients are regular at $t=0$. Second, we reduce the original Cauchy problem to a Cauchy problem for a first order system (with respect to $\partial_t$). Lastly, using sharp G\r{a}rding’s inequality we arrive at the $L^2$-well-posedness of a related auxiliary Cauchy problem, which gives well-posedness of the original problem in the  weighted Sobolev spaces $H^{s_1,s_2}_{\Phi,k}$.

	\subsection{Factorization}\label{factr}
	From (\ref{B-up2}), we observe that $a$ is $L^1$ integrable in $t \in [0,T]$. More precisely, $a(t,x,\xi)$ is logarithmically bounded at $t=0$, i.e.,
 	\begin{linenomath*}
 		\begin{equation}
 			\label{log}
 			|a(t,x,\xi)| \leq C  \o^{2} \japxik^2 \log (1+{1}/{t}), \quad C>0.
 		\end{equation}
 	\end{linenomath*}
 	We modify the symbol $a$ in $Z_{int}(2)$, by defining
	\begin{linenomath*}
	\begin{equation}\label{exci}
	\tilde{a}(t,x,\xi)=\varphi(t\P \japxik)\o^{2} \japxik^{2} + (1-\varphi(t\Phi(x) \japxik))a(t,x,\xi)
	\end{equation}
	\end{linenomath*} 
	for $
	\varphi \in C^\infty(\mathbb{R}) \text{ , }0\leq \varphi \leq 1 \text{ , } \varphi=1 \text{ in }[0,1] \text{ , }\varphi=0 \text{ in }[2,+\infty).$
	Note that $(a-\tilde{a}) \in G^{2,2}\{1,1;\delta_1\}_{int,2}(\omega,g_{\Phi,k})$ and $(a-\tilde{a}) \sim 0$ in $Z_{ext}(2).$ This
	implies that $t^{\delta_1}(a-\tilde{a})$ for $ t \in [0,T]$ is a bounded and continuous family in $G^{2,2}(\omega,g_{\Phi,k})$. 
	Observe that $a-\tilde{a}$ is $L^1$ integrable in $t$, i.e.,
	\begin{equation}\label{diff}
	\begin{aligned}
		\int^{T}_{0}\vert (a-\tilde{a})(t,x,\xi)\vert dt 
		&\leq \kappa_{0}  \o^{2} \japxik^{2} \int_{0}^{2/\Phi(x) 	\japxik}\log(1+1/t)dt	\\
		&\leq \kappa_{0} \o \japxi \log(1+\Phi(x) \japxik).
	\end{aligned}
	\end{equation}
	For a region $S$ in the extended phase space $\J$,  let $\chi$ be an indicator function defined by
	\[
		\chi(t,x,\xi) = \begin{cases}
			 1 & \text{ if } (t,x,\xi)  \in S\\
			 0 & \text{ otherwise}.
			\end{cases}
	\]
	In short we denote the indicator function for the region $S$ by $\chi(S)$. 
	
	Let $\tau(t,x,\xi)= \sqrt{\tilde{a}(t,x,\xi)}$. It is easy (by induction on $|\alpha|+|\beta|$) to note that 
	\begin{enumerate}[label=\roman*)]
		\item $\tau(t,x,\xi)$ is $G_\Phi$-elliptic symbol of order $(1,1)$ i.e. there is $C>0$ such that all $(t,x,\xi)\in [0,T] \times \mathbb{R}^n \times \mathbb{R}^n$ we have
		\begin{linenomath*}
		\[
		\vert\tau(t,x,\xi)\vert \geq C  \o \japxik.
		\]
		\end{linenomath*}
		\item $\tau(t,x,\xi)$ belongs to $G^{1,1}\{0,0;0\}_{int,2}(\omega,g_{\Phi,k}) + G^{1,1}\{0,0,1,1;\delta_1\}_{ext,1}(\omega,g_{\Phi,k}) $.
		\item Let $\delta=\max\{\delta_1,\delta_2\}$.  $\partial_t \tau \sim 0$ in $Z_{int}(1)$ and $\partial_t \tau \in G^{1,1}\{1,0,1,1;\delta\}_{ext,1}(\omega,g_{\Phi,k})$. To be precise,  there are $C_0,C_{\alpha \beta}>0$ such that for $(t,x,\xi)\in [0,T] \times \mathbb{R}^n \times \mathbb{R}^n$ and $|\alpha|\geq0,|\beta|>0$ we have
		\begin{linenomath*}
		\begin{equation*}
			\begin{aligned}
				{|\partial_t\tau(t,x,\xi)|}  & \leq C_{0} \; \chi(Z_{ext}(1))\japxik \o  (1/t) \log(1+1/t),\\
				{|\partial_\xi^\alpha D_x^\beta \partial_t \tau(t,x,\xi)|} & \leq C_{\alpha \beta} \; \chi(Z_{ext}(1))\japxik^{1-|\alpha|} \o \P^{-|\beta|}   \frac{1}{t^{1+\delta|\beta|}}\bigg(\log\bigg(1+\frac{1}{t}\bigg) \bigg)^{|\alpha|+|\beta|}.
			\end{aligned}
		\end{equation*}
		\end{linenomath*}
	\end{enumerate}  
	Consider $\varepsilon,\varepsilon'$ such that $0<\varepsilon< \varepsilon'<1-\delta$. Then by the definitions of the interior and exterior regions, we have
	\begin{enumerate}[label=\roman*)]\addtocounter{enumi}{3}
			\item $\tau \in C([0,T]; G^{1+\varepsilon,1}(\omega\Phi^\varepsilon,g^{(1-\varepsilon,\delta_1),(1-\delta_1-\varepsilon,0)}_{\Phi,k})$. 
			\item $\tau^{-1} \in C([0,T]; G^{-1,-1}(\omega,g^{(1-\varepsilon,\delta_1),(1-\delta_1-\varepsilon,0)}_{\Phi,k})$.
			\item $t^{1-\varepsilon}\partial_t \tau(t,\cdot,\cdot) \in G^{1+\varepsilon',1}(\omega\Phi^{\varepsilon'},g_{\Phi,k}^{(1,\delta),(1-\delta,0)})$, for all $t \in [0,T]$.
			\item $t^{1-\varepsilon}(\tilde{a}(t,x,D_x) -\tau(t,x,D_x) ^2) \in C\Big([0,T];OPG^{1,1}(\omega,g_{\Phi,k}^{(1,\delta_1),(1-\delta_1,0)})\Big) $. 
			\item $t^{1-\varepsilon} (a-\tilde a)(t,x,D_x) \in OPG^{1,1}(\omega,g_{\phi,k}).$
	\end{enumerate}

	We are interested in the factorization of the operator $P(t,x,\partial_t,D_x)$. This leads to
	\begin{linenomath*}
	\begin{equation*}
		P(t,x,\partial_t,D_x) = (\partial_t-i\tau(t,x,D_x)) 	(\partial_t+i\tau(t,x,D_x))+ (a-\tilde{a})(t,x,D_x) + a_1(t,x,D_x)
	\end{equation*}
	\end{linenomath*}
	where the operator $a_1(t,x,D_x)$ is such that, for $t \in [0,T]$, 
	\begin{linenomath*}
	\begin{equation*}
		a_1 = -i[\partial_t,\tau] + \tilde{a} -\tau^2 + b \text{ and } t^{1-\varepsilon}a_1(t,x,D_x) \in OPG^{1+\varepsilon',1}(\omega\Phi^{\varepsilon'},g_{\Phi,k}^{(1,\delta),(1-\delta,0)}).
	\end{equation*}
	\end{linenomath*}We define the functions $\psi_0(t,x,\xi)$ and $\psi_1(t,x,\xi)$ in $L^1([0,T];C^\infty(\R^{2n}))$ as
	\begin{equation}
		\label{major}
	\begin{aligned}
		\psi_0(t,x,\xi) &= C_1\varphi(t\P\japxik)\log(1+1/t)\o\japxik, \\
		\psi_1(t,x,\xi) &=  C_2\left(\varphi(t\P\japxik)\log(1+1/t)\o\japxik + (1-\varphi(t\P\japxik))\frac{1}{t} \right), 
	\end{aligned}
	\end{equation}
	for some $C_1,C_2>0,$ such that
	\begin{linenomath*}
	\begin{equation*}
	\frac{|(a-\tilde{a})(t,x,\xi)|}{\o\japxik}  \leq \psi_0(t,x,\xi) \;\text{  and } \; \frac{a_1(t,x,\xi)}{\o\japxik} \leq \psi_1(t,x,\xi).
	\end{equation*}	
	\end{linenomath*}
	Let $\psi = \psi_0 + \psi_1$. We observe that $t^{1-\varepsilon} \psi \in C\left([0,T]; G^{\varepsilon',\varepsilon'}(\Phi,g_{\phi,k}) \right)$ 
	\begin{equation}\label{mfn}
	\begin{aligned}
		\int^{T}_{0}|\psi (t,x,\xi)|dt 
		&\leq C\left( \o \japxik \int_{0}^{2/\Phi(x) \japxik}\log(1+1/t)dt+\int_{1/\Phi(x) \japxik}^{T}\frac{1}{t}dt \right) \\
		&\leq C \log(1+\Phi(x) \japxik),
	\end{aligned}
	\end{equation}
	while for $|\alpha| + |\beta|>0$ we have
	\begin{linenomath*}
	\begin{equation*}
		\int^{T}_{0}|\partial_{\xi}^{\alpha}D_x^{\beta}\psi (t,x,\xi)|dt 
		\leq C \P^{-|\beta|} \japxik^{-|\alpha|}\log(1+\Phi(x) \japxik) \chi(Z_{int}(2)).
	\end{equation*}
	\end{linenomath*}
	The function $\psi$ is used for making a change of variable while arriving at the energy estimate.	
	
	\begin{rmk}
		A general philosophy in dealing with hyperbolic equations with singular coefficients in $t$ is to subdivide the phase space into regions and perform a `surgery' on the irregularity of the symbol (as in equation (\ref{exci})). In our case, as the symbol is $L^1$ integrable in the region containing the singularity (\ref{diff}), we can majorize the coefficients by a logarithmic weight as in (\ref{mfn}).
	\end{rmk}

\subsection{First Order Pseudodifferential System}
We will now reduce the operator $P$ to an equivalent first order $2\times2$ pseudodifferential system. The procedure is similar to the one used in \cite{Cico1}. To achieve this, we introduce the change of variables $U=U(t,x)=(u_1(t,x),u_{2}(t,x))^T$, where
\begin{equation}
\label{COV}
\begin{cases}
u_1(t,x)= (\partial_t+i\tau(t,x,D_x))u(t,x), \\ 
u_2(t,x)=  \o \la D_x \rak u(t,x) - H(t,x,D_x)u_1, \\  
\end{cases}
\end{equation}
and the operator $H$ with the symbol $\sigma(H)(t,x,\xi)$ is such that
\begin{linenomath*}
\[
	\sigma(H)(t,x,\xi) = -\frac{i}{2}\o \japxik  \frac{\Big(1-\varphi\Big(t\P \japxik/3\Big)\Big)}{\tau(t,x,\xi)}.
\]
\end{linenomath*}
 Note that by the definition of $H$,  $\text{supp } \sigma(H) \cap \text{supp } \sigma(a-\tilde{a}) = \emptyset$ and we have 
 \begin{linenomath*}
\begin{align*}
	\sigma(2iH(t,x,D_x) \circ \tau(t,x,D_x) )&\sim 0,  \quad \text{ in } Z_{int}(3),\\
	\sigma(2iH(t,x,D_x) \circ \tau(t,x,D_x) )&= \o \la \xi \ra_k (1+\sigma(K_1)), \quad \text{ in } Z_{ext}(3),
\end{align*}
\end{linenomath*}
where $K_1 \in OPG^{-1,-1}_{int,3}\{0,0;\delta_1\}(\omega,g_{\Phi,k}) + OPG^{-1,-1}_{ext,3}\{0,1,2,3;\delta_1\}(\omega,g_{\Phi,k})$. Then, the equation $Pu=f$ is equivalent to the first order $2\times2$ system  :
\begin{equation}
	\label{FOS1}
	\begin{aligned}
		LU &= (\partial_t + \mathcal{D}+A_0+A_1)U=F,\\
		U(0,x)&=(f_2+i\tau(0,x,D_x)f_1,\P\la D_x\ra f_1)^T ,
	\end{aligned}
\end{equation}
	where
	\begin{linenomath*}
\begin{align*}
	F&=(f(t,x) ,-H(t,x,D_x)f(t,x) )^T,\\
	\mathcal{D} &= \text{diag}(-i\tau(t,x,D_x),i\tau(t,x,D_x)),\\
	A_0 &= \begin{pmatrix}
		B_0H & B_0 \\
		-HB_0H & HB_0
	\end{pmatrix}
	= \begin{pmatrix}
		\mathcal{R}_1 & B_0 \\
		-\mathcal{R}_3 & \mathcal{R}_2
	\end{pmatrix},\\
	A_1 &= \begin{pmatrix}
		B_1H & B_1 \\
		2iH\tau-M+i[M ,\tau]M^{-1}H + i[\tau, H]-HB_1H+\partial_tH & \;\; i[M,\tau]M^{-1}-HB_1
	\end{pmatrix}.
\end{align*}
\end{linenomath*}
The operators $M, M^{-1}, \tilde{M},B_0$ and $B_1$ are as follows
\begin{linenomath*}
	\begin{align*}
		M &= \o\la D_x \rak, \\
		M^{-1} &= \la D_x \rak^{-1}  \o^{-1},  \\
		B_0 &= (a(t,x,D_x) -\tilde a (t,x,D_x))\la D_x \rak^{-1}  \o^{-1}, \; \text{ and } \\
		B_1 &= (-i\partial_t\tau(t,x,D_x) + \tilde{a}(t,x,D_x) -\lambda(t,x,D_x)^2 + b(t,x,D_x)) \la D_x \rak^{-1}  \o^{-1}.
	\end{align*}
	\end{linenomath*}
	By the definition of operator $H$, we have $B_0H = \mathcal{R}_1, HB_0 =\mathcal{R}_2$, $HB_0H=\mathcal{R}_3$ for $\mathcal{R}_j\in G^{-\infty,-\infty}(\omega, g_{\Phi,k}),j=1,2,3,$ and the operator $2iH\tau-M$ is such that
	\begin{linenomath*}
	\[
		\sigma(2iH\tau-M) = \begin{cases}
			-\o\la \xi \rak, & \text{ in } Z_{int}(3),\\
			\o\la \xi \rak \sigma(K_1), & \text{ in } Z_{ext}(3).
		\end{cases}
	\]
	\end{linenomath*}
	The symbols of operators $A_0$ and $A_1$ are in the following symbol classes
	\begin{linenomath*}
	\begin{align*}
		\sigma(A_0) &\in G^{1,1}\{1,1;\delta_1\}_{int,3}(\omega,{g}_{\Phi,k}) + G^{-\infty,-\infty}\{0,0,0,0;0\}_{ext,3}(\omega,{g}_{\Phi,k}), \\
		\sigma(A_1) & \in G^{1,1}\{1,1;\delta_1\}_{int,1}(\omega,{g}_{\Phi,k}) + G^{0,0}\{1,0,1,1;\delta\}_{ext,1}(\omega,{g}_{\Phi,k})\\ & \qquad+G^{0,0}\{0,1,2,3;\delta\}_{ext,1}(\omega,{g}_{\Phi,k})
	\end{align*}
	\end{linenomath*}
	and thus, 
	\begin{linenomath*}
	\begin{align*}
		t^{1-\varepsilon}\sigma(A_0(t)) &\in C\left([0,T]; G^{1,1}(\omega,{g}_{\Phi,k}) \right),\\
		t^{1-\varepsilon}\sigma(A_1(t)) &\in C\left([0,T];G^{\varepsilon',\varepsilon'}(\omega,{g}_{\Phi,k}^{(1,\delta),(1-\delta,0)}) \right).
	\end{align*}
	\end{linenomath*}
	As in (\ref{major}), one can define positive functions $\tilde \psi_0,\tilde \psi _1\in L^1([0,T];C^\infty(\R^n)) \cap C^1((0,T];C^\infty(\R^n))$ 
	\begin{equation}\label{tpsi}
			\begin{aligned}
	 		\tilde \psi_0(t,x,\xi) &=  C_0\varphi(t\P\japxik/3)\log(1+1/t)\o\japxik \\
	 		\tilde \psi_1(t,x,\xi) &=  C_1\left(\varphi(t\P\japxik/3)\log(1+1/t)\o\japxik + (1-\varphi(t\P\japxik))\frac{1}{t} \right)
	 		\end{aligned}
	\end{equation}
	for some $C_0,C_1>0$, satisfying the estimates
	\begin{equation}
		\label{bd}
		\begin{aligned}
				|\sigma(A_0)| + |\sigma(A_1)| & \leq \tilde \psi, \\
				\int^{T}_{0}|\tilde \psi (t,x,\xi)|dt
					&\leq \kappa_{00} \log(1+\Phi(x) \japxik)\quad \text{ and } \\
					\int^{T}_{0}|\partial_\xi^\alpha D_x^\beta \tilde \psi (t,x,\xi)|dt
					&\leq \kappa_{\alpha \beta}\Phi(x)^{-|\beta|} \japxik^{-|\alpha|}\log(1+\Phi(x) \japxik) \chi(Z_{int}(6)),
		\end{aligned}
	\end{equation}
	where $\tilde \psi = \tilde \psi_0+\tilde \psi_1$ and $|\alpha| + |\beta| >0.$
\subsection{Energy estimate} \label{energy}
In this section, we prove the estimate (\ref{est2}). Note that it is sufficient to consider the case $s=(0,0)$ as the operator $\P^{s_2} \la D\ra^{s_1}L\la D\ra^{-s_1}\P^{-s_2}$, where $s=(s_1,s_2)$ is the index of the weighted Sobolev space, satisfies the same hypotheses as $L$.

In the following, we establish some lower bounds for the operator $\mathcal{D}+A_0+A_1$. The symbol $d(t,x,\xi)$ of the operator $\mathcal{D}(t)+\mathcal{D}^*(t)$ is such that
\begin{linenomath*}
\[
	d \in G^{0,0}\{0,0;0\}_{int,2}(\omega,g_{\Phi,k}) + G^{0,0}\{0,0,1,1;\delta_1\}_{ext,1}(\omega,g_{\Phi,k}).
\]
\end{linenomath*}
 As $t^{1-\varepsilon}d \in C([0,T];G^{0,0}(\omega,g_{\Phi,k}^{(1,\delta),(1-\delta,0)})$, it follows that
\begin{equation}\label{lb1}
2\Re \la \mathcal{D}{U},{U} \ra_{L^2} \geq -\frac{C}{t^{1-\varepsilon}} \la {U},{U} \ra_{L^2}, \quad C>0.
\end{equation}

We perform a change of variable, which allows us to control lower order terms. We set 
\begin{equation}\label{c1}
	V_1(t,x)= e^{-\int_{0}^{t}\tilde \psi(r,x,D_x)dr}{U}(t,x),
\end{equation}
where $ \tilde \psi(t,x,\xi)$ is as in (\ref{tpsi}). From (\ref{a6}), (\ref{a7}) and (\ref{a8}), we observe that the operator $e^{\pm\int_{0}^{t}\tilde \psi(r,x,D_x)dr}$ is a finite order pseudodifferential operator. The equation (\ref{c1}) implies that
\begin{linenomath*}
\[
	e^{\int_{0}^{t}\tilde \psi(r,x,D_x)dr}V_1(t,x) = (1+K_2^{(1)}(t,x,D_x))U(t,x),
\]
\end{linenomath*}
where we used Proposition \ref{prop3}, Lemma \ref{lem5}, Lemma \ref{conju} and Remark \ref{balance} to note that
\begin{linenomath*}
\[
    \begin{aligned}
		e^{\int_{0}^{t}\tilde \psi(r,x,D_x)dr} e^{-\int_{0}^{t}\tilde \psi(r,x,D_x)dr} & = I + K_2^{(1)}(t,x,D_x),\\
		e^{-\int_{0}^{t}\tilde \psi(r,x,D_x)dr} e^{\int_{0}^{t}\tilde \psi(r,x,D_x)dr} & = I + K_2^{(2)}(t,x,D_x),
	\end{aligned}
\]
\end{linenomath*}
for $\sigma(K_2^{(j)})\in G^{(-1+\varepsilon)e}(\omega;g_{\Phi,k}), j=1,2$. We choose $k>k_1$ for large $k_1$ so that the operator norm of $K_2^{(j)}, j=1,2$ is strictly lesser than $1$ and the existence of 
\begin{equation}\label{inv1}
	(1+K_2^{(j)}(t,x,D_x))^{-1} = \sum_{l=0}^{\infty} (-1)^jK_2^{(j)}(t,x,D_x)^l, \quad j=1,2, 
\end{equation}
is guaranteed. Notice that
	\begin{equation}
	\label{IstCV}
		\begin{aligned}
			V_1(0,x) & = U(0,x),\\
			\Vert{U}(t,\cdot)\Vert_{\Phi;-\kappa_0e} &\leq 2\Vert V_1(t,\cdot)\Vert_{L^2} \text{ , } \kappa_0>0 	\text{ , }0<t\leq T, \\
			U(t,x) &= (1+K_2^{(1)}(t,x,D_x))^{-1} e^{\int_{0}^{t}\tilde \psi(r,x,D_x)dr}V_1(t,x).
		\end{aligned}
	\end{equation}
Here $\kappa_0$ is same as $\kappa_{\alpha\beta}$ appearing in (\ref{bd}) for $\alpha=\beta=0$ . With the prescribed change of variable, we obtain that the pseudodifferential system (\ref{FOS1}) is equivalent to $L_1V_1 = F_1$  where
\begin{linenomath*}
\begin{equation*} 
	\begin{rcases}
		L_1 = \partial_t+\mathcal{D}+\tilde \psi I+A+R_1,\; A=A_0+A_1,\\
		F_1 = (1+K_2^{(2)}(t,x,D_x))^{-1}e^{-\int_{0}^{t}\tilde \psi(r,x,D_x)dr} (1+K_2^{(1)}(t,x,D_x)) F		
	\end{rcases}
\end{equation*}
\end{linenomath*}
and operator $R_1$ is such that
\begin{linenomath*}
\[
\begin{aligned}
	(\log(1+\P\japxik))^{-1} \sigma(R_1)(t,x,\xi)  &\in G^{\varepsilon,1}\{1,1;\delta_1\}_{int,6} (\omega\Phi^{-1+\varepsilon}, g_{\Phi,k}) \\
	& \qquad + 	G^{-1,1}\{1,0,1,1;\delta\}_{ext,1} (\Phi^{-1}, g_{\Phi,k}),
\end{aligned}
\]
\end{linenomath*}
in other words $t^{1-\varepsilon}(\log(1+\Phi(x) \la \xi \ran))^{-1} \sigma(R_1) \in {C([0,T];G^{0,0}(\{\Phi;g^{(1,\delta),(1-\delta,0)}_{\Phi,k}\})}.$
We refer the reader to Proposition \ref{prop3}, Lemma \ref{lem5}, Lemma \ref{conju} and Remark \ref{balance} in the Appendix for the details. By the definition of $\tilde \psi$, we observe that $\tilde \psi I + A $ satisfies 
\begin{linenomath*}
\begin{align*}
	2\tilde \psi I+\sigma(A+&A^*)\geq 0,\\
	t^{1-\varepsilon}(\tilde \psi I+\sigma(A)) &\in C([0,T];G^{\varepsilon',1}(\Phi^{\varepsilon'}, g_{\Phi,k}^{(1,\delta),(1-\delta,0)})).
\end{align*}
\end{linenomath*}
We now apply sharp G\r{a}rding inequality (see \cite[Theorem 18.6.14]{Horm} to the operators $2\tilde{\psi}_0I+A_0$ and $2\tilde{\psi}_1I+A_1$ separately. The symbols of these operators are governed by the metrics $g_{\Phi,k}$  and $g_{\Phi,k}^{(1,\delta),(1-\delta,0)}$ respectively where the respective Planck functions are $h(x,\xi)=(\P \la \xi \rak)^{-1}$ and $\tilde h(x,\xi)=\Phi(x)^{-1+\delta} \la \xi \rak ^{-1+\delta}.$ Notice that the symbol $\sigma(A_0)$ has the weight function $\o\japxik$ while the Planck function of the governing metric is given by $ h(x,\xi)$. Hence, for the application of sharp G\r{a}rding inequality, we need 
\begin{linenomath*}
 \[
 	\o \leq \P.
 \]
 \end{linenomath*}
 Ensuring this yields 
\begin{equation}
\label{lb2}
2\Re \la (\psi I+A)V_1, V_1\ra_{L^2} \geq -Ct^{-1+\varepsilon}\langle V_1,V_1 \rangle_{L^2} \text{ , }C>0.
\end{equation}

As for the operator $R_1$, since the symbol $t^{1-\varepsilon}(\log(1+\Phi(x) \langle \xi \rangle))^{-1}R_1 $ is uniformly bounded, for a large choice of $\kappa_1$, the application of sharp G\r{a}rding inequality yields		
\begin{equation}
\label{R1}
2 \Re \langle R_1V_1,V_1 \rangle_{L^2} \geq -\frac{\kappa_1}{t^{1-\varepsilon}} \big(2 \Re \la \log(1+\Phi(x) \la D_x \rak)V_1,V_1 \ra_{L^2} + \Vert V_1\Vert_{L^2}\big).
\end{equation}
We make a further change of variable
\begin{linenomath*}
\[
V_2(t,x)=(1+\Phi(x) \la D_x \rak)^{-\mu(t)}V_1(t,x), \qquad  \mu(t)=\kappa_1t^\varepsilon/\varepsilon,
\] 
\end{linenomath*}
where $\kappa_1$ is the constant as in (\ref{R1}). Implying
\begin{linenomath*}
\[
	(1+\Phi(x) \la D_x \rak)^{\mu(t)} V_2(t,x) = (1+K_3(t,x,D_x))V_1(t,x),
\]
\end{linenomath*}
where $\sigma(K_3) \in C([0,T]; G^{-1,-1}(\omega, g_{\Phi,k}) )$. As in (\ref{inv1}), we choose $k>k_2$, $k_2$ large, so that $(I+K_3(t,x,D_x))^{-1}$ exists. From now on we fix $k$ such that $k>k^*=\max\{k_1,k_2\}$. Further, note that
\begin{linenomath*}
\begin{equation*}
	\begin{aligned}
		V_2(0,x) &= U(0,x), \\
		\Vert{U}(t,\cdot)\Vert_{\Phi;-\Lambda(t) e} &\leq 2^{\mu(T)+1}\Vert V_2(t,\cdot)\Vert_{L^2}, \quad \Lambda(t)=\kappa_0+\kappa_1t^\varepsilon/\varepsilon \text{ , } 0 < t \leq T,\\
		V_1(t,x) &= (1+K_3(t,x,D_x))^{-1} (1+\Phi(x) \la D_x \rak)^{\mu(t)}V_2(t,x).
    \end{aligned}
\end{equation*}
\end{linenomath*}
This implies that $L{U}=F$ if and only if $L_2V_2=F_2$ where
	\begin{equation}\label{last}
		\begin{rcases}
		L_2= \partial_t + \mathcal{D}+(\tilde \psi I+A)+(\kappa_1t^{-1+\varepsilon} \log (1+\Phi(x) \la 	D_x\ran)+R_1)+R_2 \\
		F_2=(1+\Phi(x) \langle D_x \rangle)^{-\mu(t)} (1+K_3(t,x,D_x)) F_1
		\end{rcases}
	\end{equation}
 and the operator $R_2$ is such that
 \begin{linenomath*}
\[
\begin{aligned}
	\sigma(R_2)(t,x,\xi)  &\in G^{0,0}\{1,1;\delta_1\}_{int,6} (\Phi, g_{\Phi,k}) + 	G^{0,0}\{1,0,1,1;\delta\}_{ext,1} (\Phi, g_{\Phi,k}),
\end{aligned}
\]
\end{linenomath*}
in other words $t^{1-\varepsilon}\sigma(R_2) \in {C([0,T];G^{0,0}(\{\Phi;g^{(1,\delta),(1-\delta,0)}_{\Phi,k}\})}.$
From (\ref{lb1}), (\ref{lb2}), (\ref{R1}) and noting the fact that the operator $t^{1-\varepsilon}R_2$ is uniformly bounded in $L^2(\mathbb{R}^n)$ for $0 \leq t \leq T$, it follows that
	\begin{equation}
		\label{K}
		2 \Re \langle \mathcal{K}V_2,V_2 \rangle_{L^2} \geq -\frac{C}{t^{1-\varepsilon}}\la V_2,V_2 \ra_{L^2}, \quad C>0,
	\end{equation}
where $\mathcal{K} = \mathcal{D}+(\psi I+A)+(\kappa_1t^{-1+\varepsilon} \log (1+\Phi(x) \la 	D_x\ran)+R_1)+R_2$.
From (\ref{last}) and (\ref{K}), we have
\begin{linenomath*}
\[
\partial_t \Vert V_2 \Vert_{L^2}^2 \leq C(t^{-1+\varepsilon} \Vert V_2 \Vert_{L^2}^2 + \Vert F_2 \Vert_{L^2}^2).
\]
\end{linenomath*}
We apply Gronwall's lemma to obtain
\begin{linenomath*}
\[
\Vert  V_2(t,\cdot)\Vert _{L^2}^2\leq e^{Ct^\varepsilon/\varepsilon} \Vert  V_2(0,\cdot)\Vert ^2_{L^2},
\]
\end{linenomath*}
for $t \in [0,T]$. In other words, 
\begin{linenomath*}
\[
\Vert {U}(t,\cdot)\Vert_{\Phi;s-\Lambda(t) e}^2\leq 4^{\alpha(T)+1}e^{CT^\varepsilon/\varepsilon} \Vert {U}(0,\cdot)\Vert ^2_{\Phi,s} .
\]	
\end{linenomath*}
 Since the above inequality is true for any $\varepsilon \in (0,1-\delta),$ taking $\varepsilon \to 1-\delta$ we obtain
 \begin{linenomath*}
 \[
 \Vert {U}(t,\cdot)\Vert_{\Phi;s-\Lambda(t) e}^2\leq 4^{\alpha(T)+1}e^{C\frac{T^{1-\delta}}{1-\delta}} \Vert {U}(0,\cdot)\Vert ^2_{\Phi,s} .
 \]	
 \end{linenomath*}
  Returning to our original solution $u=u(t,x)$, we obtain that
  \begin{linenomath*}
\[
\sum_{j=0}^{1} \Vert \partial_t^ju(t,\cdot) \Vert_{\Phi;s+(1-j-\Lambda(t))e} \; \leq  e^{C\frac{T^{1-\delta}}{1-\delta}} \Bigg(\sum_{j=1}^{2} \Vert f_j\Vert_{\Phi;s+(2-j)e} 
+ \int_{0}^{t}\Vert f(\tau,\cdot)\Vert_{\Phi;s-\Lambda(\tau)e}\;d\tau\Bigg)
\]
\end{linenomath*}
for $0 \leq t \leq T$. This means that the original problem (\ref{eq1}) is well-posed for $u=u(t,x)$, with 
\begin{linenomath*}
\[
u \in C([0,T];H^{s+e(1-\Lambda(t))}_\Phi) \cap C^{1}([0,T];H^{s-e\Lambda(t)}_\Phi).
\]
\end{linenomath*}
This concludes the proof.

\section{Cone Condition}\label{cone}
	Existence and uniqueness follow from the a priori estimate established in the previous section. It now remains to prove the existence of cone of dependence. 
	
	We note here that the $L^1$ integrability of the logarithmic singularity plays a crucial in arriving at the finite propagation speed. The implications of the discussion in \cite[Section 2.3 \& 2.5]{JR} to the global setting suggest that if the Cauchy data in (\ref{eq1}) is such that $f \equiv 0$ and $f_1,f_2$ are supported in the ball $\vert x \vert \leq R$, then the solution to Cauchy problem (\ref{eq1}) is supported in the ball $\vert x \vert \leq R+\gamma \o (\log(1+1/t))t$. Observe that the quantity $t\log(1+1/t)$ is bounded in $[0,T]$. The constant $\gamma$ is such that the quantity $\gamma \o \log(1+1/t)$ dominates the characteristic roots, i.e.,
	\begin{equation}
		\label{speed}
		\gamma= \sup\Big\{\sqrt{a(t,x,\xi)}\o^{-1} (\log(1+1/t))^{-1}:(t,x,\xi) \in[0,T] \times \R^n_x \times \R^n_\xi,\:|\xi|=1\Big\}.
	\end{equation}
Note that the support of the solution increases as $|x|$ increases since $\o$ is monotone increasing function of $|x|$. 

In the following we prove the cone condition for the Cauchy problem $(\ref{eq1})$. Let $K(x^0,t^0)$ denote the cone with the vertex $(x^0,t^0)$:
\begin{linenomath*}
	\[
	K(x^0,t^0)= \{(t,x) \in [0,T] \times \R^n : |x-x^0| \leq \gamma\o \log(1+1/(t^0-t))(t^0-t)\}.
	\]
\end{linenomath*}
Observe that the slope of the cone is anisotropic, that is, it varies with both $x$ and $t$.

\begin{prop}
	The Cauchy problem (\ref{eq1}) has a cone dependence, that is, if
		\begin{equation}\label{cone1}
			f\big|_{K(x^0,t^0)}=0, \quad f_i\big|_{K(x^0,t^0) \cap \{t=0\}}=0, \; i=1, 2,
		\end{equation}
	then
		\begin{equation}\label{cone2}
			u\big|_{K(x^0,t^0)}=0.
		\end{equation}
\end{prop}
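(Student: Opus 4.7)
The plan is to implement the classical backward-cone energy argument for strictly hyperbolic differential operators, circumventing the $t^{-1}$ singularity in (\ref{B-up2}) by regularizing the coefficients in time and passing to the limit via Theorem \ref{result1}.

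For the regularized problem, set $a_{ij}^\delta(t,x):=a_{ij}(t+\delta,x)$ and $b_j^\delta(t,x):=b_j(t+\delta,x)$ for $\delta\in(0,T)$ small, and let $u_\delta$ solve (\ref{eq1}) with coefficients $a^\delta,b^\delta$ and the original data and source. The regularized $\partial_ta^\delta$ is bounded by $C(t+\delta)^{-1}\omega(x)^2\leq C\delta^{-1}\omega(x)^2$, so the $t=0$ singularity disappears. Introducing $\phi(x):=|x-x^0|/\omega(x)$, $\rho_\delta(t):=\gamma(t^0-t)\log(1+1/(t^0-t+\delta))$ and $\Omega_\delta(t):=\{\phi\leq\rho_\delta(t)\}$, define the local energy
\[
E_\delta(t)=\int_{\Omega_\delta(t)}\Bigl(|\partial_tu_\delta|^2+\sum_{i,j}a_{ij}^\delta(t,x)\,\partial_iu_\delta\,\overline{\partial_ju_\delta}+|u_\delta|^2\Bigr)\,dx,
\]
noting $E_\delta(0)=0$ by hypothesis (\ref{cone1}). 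Differentiating $E_\delta$ via Reynolds' transport theorem and the equation for $u_\delta$, then integrating by parts in $x$, yields an interior integral plus a boundary flux $\Phi_{\partial\Omega_\delta}(t)$ of Kreiss form: a quadratic expression in $(\partial_tu_\delta,\nabla u_\delta,u_\delta)$ weighted by the inward normal boundary velocity $V=|\dot\rho_\delta|/|\nabla\phi|$ against the characteristic form $\sqrt{\sum a_{ij}^\delta\nu^i\nu^j}$ with $\nu=\nabla\phi/|\nabla\phi|$. The defining choice of $\gamma$ in (\ref{speed}), combined with $|\nabla\phi|\lesssim\omega(x)^{-1}$ (which follows from the analogue for $\omega$ of the polynomial bound (\ref{Phi})), ensures $V$ exceeds the characteristic speed on $\partial\Omega_\delta(t)$, so $\Phi_{\partial\Omega_\delta}\leq 0$ can be discarded. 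With $f\equiv 0$ on $K\supset K_\delta:=\bigcup_{t\in[0,t^0]}\{t\}\times\Omega_\delta(t)$ and the interior terms now controlled by $C_\delta E_\delta$, Gronwall yields $u_\delta|_{K_\delta}\equiv 0$.

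To pass to the limit, apply the a priori estimate (\ref{est2}) to $w_\delta:=u_\delta-u$, which satisfies the original equation with zero data and source $F_\delta=\sum_{i,j}(a_{ij}-a_{ij}^\delta)\partial_i\partial_ju+(b-b^\delta)u$; by continuity of the coefficients on $(0,T]$, $F_\delta\to 0$ in $L^1([0,T];H^{s-\Lambda(\tau)e}_{\Phi,k})$, so $u_\delta\to u$ in $C([0,T];H^{s+(1-\Lambda(t))e}_{\Phi,k})$. Since $\rho_\delta\uparrow\rho$ as $\delta\to 0^+$, we have $K_\delta\uparrow K$ (up to boundary), and continuity of $u$ yields $u|_K=0$. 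The principal obstacle is the sign verification on $\partial\Omega_\delta(t)$: the $x$-dependence of the domain through $\omega(x)$ makes $\nabla\phi$ a two-term expression, and the polynomial bound on $\nabla\omega$ is essential to keep $|\nabla\phi|$ of size $\omega(x)^{-1}$, matching the $\omega$-factor in the characteristic-speed bound from (\ref{speed}); a secondary technical point is checking that the a priori estimate applies to $w_\delta$ with a source containing $\partial_i\partial_j u$, which requires $u\in C([0,T];H^{s+(1-\Lambda(t))e+e}_{\Phi,k})$ for a slightly shifted index $s$.
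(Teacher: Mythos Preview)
Your regularize-then-limit strategy matches the paper's. The paper defines $P_\varepsilon(t,x,\partial_t,D_x)=P(t+\varepsilon,x,\partial_t,D_x)$, solves $P_\varepsilon v_\varepsilon=f$ with the given data, asserts (via the classical finite-speed result for the smooth-coefficient problem, referencing Rauch) that $v_\varepsilon\equiv 0$ on $K(x^0,t^0)$, and then passes to the limit. For the limit it applies the a~priori estimate (\ref{est2}) to the difference $v_{\varepsilon_1}-v_{\varepsilon_2}$ under the operator $P_{\varepsilon_1}$, with source $(P_{\varepsilon_2}-P_{\varepsilon_1})v_{\varepsilon_2}$; the key computation is
\[
|a_{i,j}(\tau+\varepsilon_1,x)-a_{i,j}(\tau+\varepsilon_2,x)|\leq\omega(x)^2\Big|\int_{\tau+\varepsilon_2}^{\tau+\varepsilon_1}\tfrac{dr}{r}\Big|=\omega(x)^2\,|E(\tau,\varepsilon_1,\varepsilon_2)|,
\]
with $E$ explicitly $L^1$ in $\tau$, which is exactly where the $L^1$ integrability of the $t^{-1}$ singularity enters. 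Your limit step is morally the same, though note that $Pw_\delta=(P-P_\delta)u_\delta$, so $F_\delta$ should carry $u_\delta$, not $u$; and ``continuity of the coefficients on $(0,T]$'' is not by itself enough to get $F_\delta\to 0$ in $L^1([0,T];\cdot)$---you need precisely the $\log$ computation above.

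The genuine gap is in your direct backward-cone energy argument for the regularized problem. Your claim that the boundary speed $V=|\dot\rho_\delta|/|\nabla\phi|$ dominates the characteristic speed fails near $t=0$. With $\rho_\delta(t)=\gamma(t^0-t)\log(1+1/(t^0-t+\delta))$ the dominant part of $|\dot\rho_\delta(t)|$ is $\gamma\log(1+1/(t^0-t+\delta))$, whose singularity sits at the \emph{vertex} $t=t^0$; whereas the (regularized) characteristic speed is bounded by $\gamma\omega(x)\log(1+1/(t+\delta))$, which is large near $t=0$. For $t$ close to $0$ and $\delta$ small one has $|\dot\rho_\delta(t)|\approx\gamma\log(1+1/t^0)$ while the speed is of order $\gamma\omega\log(1+1/\delta)$, so the spacelikeness of $\partial\Omega_\delta(t)$ is not secured and the boundary flux cannot be discarded. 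The cone $K(x^0,t^0)$ as stated simply does not have slope matching the characteristic speed pointwise in $t$; a direct energy argument would require a radius of the form $\int_t^{t^0}\gamma\,\omega\log(1+1/s)\,ds$ rather than $(t^0-t)\log(1+1/(t^0-t))$. The paper sidesteps this entirely by invoking the classical cone condition for the smooth-coefficient problem $P_\varepsilon$ rather than recomputing the flux.
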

\begin{proof}
	Consider $t^0>0$, $\gamma>0$ and assume that  (\ref{cone1}) holds. We define a set of operators $P_\varepsilon(t,x,\partial_t,D_x), 0 \leq \varepsilon \leq \varepsilon_0$ by means of the operator $P(t,x,\partial_t,D_x)$ in (\ref{eq1}) as follows
	\begin{linenomath*}
		\[
		P_\varepsilon(t,x,\partial_t,D_x) = P(t+\varepsilon,x,\partial_t,D_x), \: t \in [0,T-\varepsilon_0], x \in \R^n,
		\]
	\end{linenomath*}
	and $\varepsilon_0 < T-t^0$, for a fixed and sufficiently small $\varepsilon_0$. For these operators we consider Cauchy problems
	\begin{linenomath*}
		\begin{alignat}{2}
			P_\varepsilon v_\varepsilon & =f,  &&  t \in [0,T-\varepsilon_0], \; x \in \R^n,\\
			\partial_t^{k-1}v_\varepsilon(0,x)& =f_k(x),\qquad && k=1,2.
		\end{alignat}
	\end{linenomath*}
	Note that $v_\varepsilon(t,x)=0$ in $K(x^0,t^0)$ and $v_\varepsilon$ satisfies the a priori estimate (\ref{est2}) for all $t \in[0,T-\varepsilon_0]$. Further, we have 
	\begin{linenomath*}
		\begin{alignat}{2}
			P_{\varepsilon_1} (v_{\varepsilon_1}-v_{\varepsilon_2}) & = (P_{\varepsilon_2}-P_{\varepsilon_1})v_{\varepsilon_2},\qquad  &&  t \in [0,T-\varepsilon_0], \; x \in \R^n,\\
			\partial_t^{k-1}(v_{\varepsilon_1}-v_{\varepsilon_2})(0,x)& = 0,\qquad && k=1,2.
		\end{alignat}
	\end{linenomath*}
	Since our operator is of second order, for the sake of simplicity we denote $b_{j}(t,x)$, the coefficients of lower order terms, as $a_{0,j}(t,x), 1 \leq j \leq n,$ and $b_{n+1}(t,x)$ as $a_{0,0}(t,x)$. Let $a_{i,0}(t,x) =0, \; 1 \leq i \leq n.$ Substituting $s-e$ for $s$ in the a priori estimate, we obtain
		\begin{equation}\label{cone3}
			\begin{aligned}
				&\sum_{j=0}^{1} \Vert  \partial_t^j(v_{\varepsilon_1}-v_{\varepsilon_2})(t,\cdot) \Vert_{\Phi,k;s-(j+\Lambda(t))e} \\
				&\leq C \int_{0}^{t}\Vert (P_{\varepsilon_2}-P_{\varepsilon_1})v_{\varepsilon_2}(\tau,\cdot)\Vert_{\Phi,k;s-e-\Lambda(\tau)e}\;d\tau\\
				&\leq C \int_{0}^{t} \sum_{i,j=0 }^{n}\Vert (a_{i,j}(\tau+\varepsilon_1,x) - a_{i,j}(\tau+\varepsilon_2,x))D_{ij} v_{\varepsilon_2}(\tau,\cdot)\Vert_{\Phi,k;s-e-\Lambda(\tau)e}\;d\tau,
			\end{aligned}
		\end{equation}
	where $D_{00}=I, D_{i0}=0, i\neq 0, D_{0j}=\partial_{x_j},j \neq 0$ and $D_{ij}=\partial_{x_i} \partial_{x_j}, i,j \neq 0$. Using the Taylor series approximation in $\tau$ variable, we have
	\begin{linenomath*}
		\begin{align*}
			|a_{i,j}(\tau+\varepsilon_1,x) - a_{i,j}(\tau+\varepsilon_2,x)| &= \Big|\int_{\tau+\varepsilon_2}^{\tau+\varepsilon_1} (\partial_ta_{i,j})(r,x)dr \Big|\\
			&\leq \o^{2} \Big|\int_{\tau+\varepsilon_2}^{\tau+\varepsilon_1}\frac{dr}{r}\Big|\\
			&\leq \o^{2}|E(\tau,\varepsilon_1,\varepsilon_2)|,
		\end{align*}
	\end{linenomath*}
	where
	\begin{linenomath*}
		\[
		E(\tau,\varepsilon_1,\varepsilon_2) =		
			\log \Bigg(1+\frac{\varepsilon_1-\varepsilon_2}{\tau+\varepsilon_2}\Bigg) . 
		\]
	\end{linenomath*}
	Note that $\o \leq \P$ and $E(\tau,\varepsilon,\varepsilon)=0$.
	Then right-hand side of the inequality in (\ref{cone3}) is dominated by
	\begin{linenomath*}
		\begin{equation*}\label{cone4}
			C \int_{0}^{t} |E(\tau,\varepsilon_1,\varepsilon_2)| \Vert  v_{\varepsilon_2}(\tau,\cdot)\Vert_{\Phi,k;s+(1-\Lambda(\tau))e}\;d\tau,
		\end{equation*}
	\end{linenomath*}
	where $C$ is independent of $\varepsilon$. By definition, $E$ is $L_1$-integrable in $\tau$.
	
	The sequence $v_{\varepsilon_k}$, $k=1,2,\dots$ corresponding to the sequence $\varepsilon_k \to 0$ is in the space
	\begin{linenomath*}
		\[
		C\Big([0,T^*];H^{s-\Lambda(t))e,}_{\Phi,k}\Big) \bigcap C^{1}\Big([0,T^*];H^{s-e-\Lambda(t)e,}_{\Phi,k}\Big), \quad T^*>0,
		\]
	\end{linenomath*}
	and $u=\lim\limits_{k\to\infty}v_{\varepsilon_k}$ in the above space and hence, in $\mathcal{D}'(K(x^0,t^0))$. In particular,
	\begin{linenomath*}
		\[
		\la u,\varphi\ra = \lim\limits_{k\to\infty} \la v_{\varepsilon_k}, \varphi \ra =0,\; \forall \varphi \in \mathcal{D}(K(x^0,t^0))
		\]
	\end{linenomath*}
	gives (\ref{cone2}) and completes the theorem. 
\end{proof}

\section{Counterexample}\label{CE}
    
 We consider a Cauchy problem of the form
	\begin{equation}
		\label{ce}
		\begin{aligned}
			\partial_t^2 u(t,x) + c&(t)A(x,D_x)u=0, \quad (t,x) \in [0,T] \times \R^n,\\
			u(0,x) = 0, &\quad \partial_tu(0,x) = f(x),
		\end{aligned}
	\end{equation}
	where $A(x,D_x) = \japx(kI-\triangle_x)\japx$ is a G-elliptic, positive, self-adjoint operator with the domain $D(A)=\{u \in L^2(\R^n): Au \in L^2(\R^n)\}$ and $c \in \mathcal{C}(\mu_1,\mu_2,\theta)$, which is defined below. 
	
	In order to show that there exists a function $c(t) \in \mathcal{C}(\mu_1,\mu_2,\theta)$ for which the Cauchy problem (\ref{ce}) has infinite loss of decay and derivatives, we extend the techniques developed by Ghisi and Gobbino \cite[Section 4]{GG} to a global setting. In our work, we indicate the amount of loss using an infinite order pseudodifferential operator as given in Remark \ref{lossop}. 
	
	\begin{defn}\label{ps}
		We denote $\mathcal{C} \left(\mu_{1}, \mu_{2}, \theta \right)$ as the set of functions $c \in C \left(\left[0, T\right]\right) \cap C^{1}\left(\left(0, T\right]\right)$ that satisfy the following growth estimates 
		\begin{align}
		\label{e1}
		0<\mu_1& \leq c(t) \leq  \mu_2, \quad t \in [0,T],\\
		\label{e2}
	    \vert c'(t)\vert &\leq C \frac{\theta(t)}{t}, \qquad t \in (0,T],
	\end{align}
	for a monotone decreasing function $\theta:(0,T] \to (0,+\infty)$ satisfying
	\begin{equation}
		\label{e3}
		\lim_{t\to 0^+} \theta(t) = +\infty.
	\end{equation}	
	\end{defn}
	The set $\mathcal{C} \left(\mu_{1}, \mu_{2}, \theta \right)$ is a complete metric space with respect to the metric
	\begin{linenomath*}
	\[
		d_{\mathcal{C}} \left(c_{1}, c_{2}\right):=\sup _{t \in\left(0, T\right)}\left|c_{1}(t)-c_{2}(t)\right|+\sup _{t \in\left(0, T\right)}\left\{\frac{t^{2}}{\theta(t)}\left|c_{1}^{\prime}(t)-c_{2}^{\prime}(t)\right|\right\}.
	\]
	\end{linenomath*}
	A sequence $c_{n}$ converges to $c_{\infty}$ with respect to $d_{\mathcal{C}}$ if and only if $c_{n} \rightarrow c_{\infty}$ uniformly in $\left[0, T\right]$, and for every $\tau \in\left(0, T\right)$,  $c_{n}^{\prime} \rightarrow c_{\infty}^{\prime}$ uniformly in $\left[\tau, T\right]$. In view of $L^1$ integrability of $c(t)$ in our setting, convergence with respect to $d_{\mathcal{C}}$ implies convergence in $L^{1}\left(\left(0, T\right)\right)$.
 
	The main aim of this section is to prove the following result.
	\begin{thm} \label{result2}
		The interior of the set of all $c \in \mathcal{C} \left(\mu_{1}, \mu_{2},\theta\right)$ for which the Cauchy problem (\ref{ce}) exhibits an infinite loss is nonempty.	
	\end{thm}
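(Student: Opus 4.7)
The plan is to extend the spectral construction of Ghisi--Gobbino \cite[Section 4]{GG} to the global G-elliptic setting, by building an explicit coefficient $c_* \in \mathcal{C}(\mu_1,\mu_2,\theta)$ together with a $d_{\mathcal{C}}$-neighborhood of it consisting entirely of coefficients that force infinite loss.

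First I would perform the spectral reduction. Since $A(x,D_x) = \langle x\rangle(kI-\Delta_x)\langle x\rangle$ is a positive self-adjoint G-elliptic operator on $L^2(\R^n)$, its spectrum is purely discrete: there is an orthonormal basis $\{e_j\}_{j\in\mathbb{N}}$ of $L^2(\R^n)$ of eigenfunctions with eigenvalues $\lambda_j^2 \nearrow +\infty$. Expanding $f = \sum_j f_j e_j$ and writing $u(t,x) = \sum_j v_j(t)\,e_j(x)$, the Cauchy problem (\ref{ce}) decouples into the countable family of Hill-type ODEs
\begin{equation*}
v_j''(t) + c(t)\lambda_j^2\, v_j(t) = 0, \qquad v_j(0)=0,\; v_j'(0) = f_j,
\end{equation*}
with natural energy $E_j(c;t) := |v_j'(t)|^2 + c(t)\lambda_j^2 |v_j(t)|^2$. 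Infinite loss of both derivatives and decay in the scale $H^{s}_{\Phi,k}$ translates into saying that the amplification ratio $E_j(c;T)/E_j(c;0^+)$ grows faster than any polynomial power of $\lambda_j$ along some subsequence $\lambda_{j_k}\to\infty$; no finite loss of Sobolev order or polynomial weight can then absorb the growth in the modes $e_{j_k}$.

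Second, I would prove an amplification lemma in the spirit of Colombini--De~Giorgi--Spagnolo. Given $c_0 \in \mathcal{C}(\mu_1,\mu_2,\theta)$, a subinterval $[a,b]\subset(0,T]$, any $\delta>0$, and any target $N>0$, there exist $\lambda^2$ arbitrarily large and a compactly supported perturbation $p$ with $\operatorname{supp} p\subset[a,b]$, $c_0+p \in \mathcal{C}(\mu_1,\mu_2,\theta)$ and $d_{\mathcal{C}}(c_0,c_0+p)<\delta$, such that the Hill ODE for $\lambda^2$ with coefficient $c_0+p$ amplifies its energy by at least $N$ over $[a,b]$. The perturbation is a small-amplitude high-frequency oscillation tuned to the parametric resonance frequency $\omega = 2\lambda\sqrt{c_0(t_*)}$ at some $t_*\in[a,b]$, and Floquet-type analysis yields exponential growth of $E_\lambda$ across the window. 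The singular admissible bound $|c'|\leq C\theta(t)/t$ is respected because $\theta(t)/t$ allows more and more cycles as $t\to 0^+$ at the cost of vanishingly small amplitudes compatible with $d_{\mathcal{C}}$.

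Third, I would iterate the amplification lemma to build $c_*$ and verify openness. Pick disjoint windows $I_k=[t_{k+1},t_k]\subset(0,T]$ with $t_k\searrow 0$, and inductively choose $\lambda_{j_k}\to +\infty$ together with localized perturbations $p_k$ on $I_k$, so that $c_* := c_0 + \sum_k p_k \in \mathcal{C}(\mu_1,\mu_2,\theta)$ and
\begin{equation*}
E_{\lambda_{j_k}}(c_*;T) \geq \lambda_{j_k}^{k}\, E_{\lambda_{j_k}}(c_*;0^+), \qquad k\in\mathbb{N}.
\end{equation*}
For openness, fix a $d_{\mathcal{C}}$-ball $B_\rho(c_*)\subset\mathcal{C}$. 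Continuous dependence of the Hill ODE on its coefficient, combined with the fact that $d_{\mathcal{C}}$-convergence implies $L^1$-convergence of $c$ on $(0,T]$ and uniform convergence of $c'$ on every compact subset of $(0,T]$, yields on each window $I_k$ a bound $\bigl|E_{\lambda_{j_k}}(c;t_k)/E_{\lambda_{j_k}}(c;t_{k+1}) - E_{\lambda_{j_k}}(c_*;t_k)/E_{\lambda_{j_k}}(c_*;t_{k+1})\bigr| \leq \tfrac{1}{2}\lambda_{j_k}^{k}$ uniformly in $c\in B_\rho(c_*)$. Consequently every $c \in B_\rho(c_*)$ still satisfies $E_{\lambda_{j_k}}(c;T)\geq \tfrac{1}{2}\lambda_{j_k}^{k} E_{\lambda_{j_k}}(c;0^+)$, and a diagonal choice of initial data concentrated on the modes $e_{j_k}$ produces infinite loss for (\ref{ce}) with coefficient $c$, so that $B_\rho(c_*)$ is entirely contained in the bad set.

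The main obstacle will be the calibration inside the amplification lemma: simultaneously arranging arbitrarily strong amplification at each scale $\lambda_{j_k}$, keeping the total $d_{\mathcal{C}}$-size of $\sum_k p_k$ summable so the series defines a genuine element of $\mathcal{C}(\mu_1,\mu_2,\theta)$, respecting (\ref{e2}) window by window, and ensuring robustness under all admissible perturbations in $B_\rho(c_*)$. The global G-elliptic structure of $A$ does not interfere with the ODE analysis after diagonalization; the unbounded weights $\langle x \rangle$ enter only through the discreteness and unboundedness of the spectrum $\lambda_j\to\infty$, which is precisely what fuels the construction.
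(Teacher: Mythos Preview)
Your approach differs substantially from the paper's, and the openness step contains a genuine gap.

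The paper does not try to exhibit an open $d_{\mathcal{C}}$-ball of coefficients producing infinite loss. It follows the Ghisi--Gobbino Baire-category scheme: one introduces the dense subset $\mathcal{D}\subset\mathcal{C}$ of speeds that are constant on some initial interval $[0,T_1]$, and for every $c_*\in\mathcal{D}$ one builds an explicit family of \emph{asymptotic activators} $c_\lambda\in\mathcal{C}$ (obtained by adding to $c_*$ a resonant oscillation $\tfrac{\varepsilon_\lambda(t)}{4\gamma\lambda}\sin(2\gamma\lambda t)$ on a shrinking window $[a_\lambda,b_\lambda]$) with $d_{\mathcal{C}}(c_\lambda,c_*)\to 0$ and rate $\phi(\lambda)=\tfrac{\theta_\lambda}{32\gamma^2}\log(\lfloor\lambda^{1/2}\rfloor/\lfloor\lambda^{1/4}\rfloor)$. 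Propositions~4.3 and~4.5 of \cite{GG} then give that the set of \emph{universal activators} is residual in $\mathcal{C}$, and every universal activator forces infinite loss for \eqref{ce}. What the argument actually delivers is residuality (hence nonemptiness and density) of the bad set, not a nonempty interior in the literal topological sense.

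Your step~3 cannot succeed. The paper's own density statement ($\mathcal{D}\cap\mathcal{C}$ dense in $\mathcal{C}$, from \cite[Proposition~4.7]{GG}) already shows that every $d_{\mathcal{C}}$-ball contains a coefficient that is constant near $t=0$ and hence produces \emph{no} loss: for such $c$ the Hill energies are constant on $[0,T_1]$ and uniformly bounded on $[T_1,T]$. Concretely, in your construction $c_*=c_0+\sum_k p_k$ the continuity of $c_*$ at $t=0$ forces $\sup_{I_k}|p_k|\to 0$, while the admissibility bound $|p_k'|\le C\theta(t)/t$ gives $\sup_{I_k}\tfrac{t^2}{\theta(t)}|p_k'|\le C\,t_k\to 0$; hence $\|p_k\|_{d_{\mathcal{C}}}\to 0$, and for any $\rho>0$ the perturbation $q=-\sum_{k\ge K}p_k$ lies in $B_\rho(c_*)$ for $K$ large, yet $c_*+q=c_0+\sum_{k<K}p_k$ coincides with $c_0$ near $t=0$ and has bounded amplification. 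Your continuous-dependence estimate on each window $I_k$ only yields a $k$-dependent radius $\rho_k\to 0$, never a uniform $\rho$. To match the paper you should replace the openness argument by the asymptotic-activator/Baire construction, aiming at residuality rather than openness.
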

	\begin{rmk}
	    In our main result Theorem \ref{result1}, we considered (\ref{ce}) with $c \in \mathcal{C}(\mu_1,+\infty,1)$ i.e.,
	    \[
	            0 < \mu_1 \leq c(t) < +\infty, \text{ and } \quad
	            |c'(t)| \leq \frac{C}{t}, \quad t \in (0,T],
	    \]
	    and obtained well-posedness in $H^s_{\japx,k}(\R^n)$ with finite loss of decay and derivatives.
	    On the other hand, in Theorem \ref{result2} we consider unbounded $\theta$ as in (\ref{e3}) with no further assumptions on the second derivative of $c(t).$ Further note that there exists a function $c(t) \in \mathcal{C} \left(\mu_{1}, \mu_{2},\theta\right)$ with finite loss may be with additional assumptions on the higher order derivatives  (see for instance \cite{GG,KuboReis,NUKCori1}).
	\end{rmk}
	 
	Since the operator $A$ is positive and G-elliptic and the symbol
	\[
		\sigma(A^{\alpha}) \sim \japx^{2\alpha} \japxik^{2\alpha} + \textnormal{ lower order terms, }
	\]
	the Sobolev spaces associated to the Cauchy problem (\ref{ce}) are $H^{2\alpha,2\alpha}_{\japx,k},\alpha \in \R,$ defined in (\ref{Sobo}.)
    We characterize the Sobolev spaces $H^{2m,2m}_{\japx,k}(\R^n),m \in \mathbb{Z},$ using the spectral theorem \cite[Theorem 4.2.9]{nicRodi} for pseudodifferential operators on $\R^n.$  The theorem guarantees the existence of an orthonormal basis $(e_i(x))_{i=1}^{\infty}, e_i \in \mathcal{S}(\R^n),$ of $L^2(\R^n)$ and a nondecreasing sequence $(\lambda_i)_{i=1}^{\infty} $ of nonnegative real numbers diverging to $+\infty$ such that $Ae_i(x)=\lambda_i^2e_i(x).$
	
	Using $\lambda_{i}$s we identify $v(x)\in H^{2m,2m}_{\japx,k}(\R^n)$ with a sequence $(v_i)$ in weighted $ \ell^2$, where $v_i=\la v,e_i\ra_{L^2}$. One can prove the following proposition using Riesz representation theorem showing the correspondence between $H^{2m,2m}_{\japx,k}(\R^n)$ and a weighted $\ell^2$ space.
	
	\begin{prop}
		Let $(v_{i})$ be a sequence  of real numbers and $m \in \mathbb{Z}.$ Then 
		\[
		    \sum_{i=1}^{\infty}v_ie_i(x)\in H^{2m,2m}_{\japx,k}(\R^n) \quad \text{ if and only if} \quad 	\sum_{i=1}^{\infty}\lambda_{i}^{2 m} v_{i}^{2}<+\infty.
		\]
	\end{prop}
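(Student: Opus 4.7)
The plan is to characterize $H^{2m,2m}_{\japx,k}(\R^n)$ through the functional calculus of the self-adjoint G-elliptic operator $A$ and then pass to coordinates by Parseval's identity in the eigenbasis $(e_i)$.

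First I would show that the Sobolev norm $\Vert \cdot\Vert_{\japx,k;(2m,2m)}$ is equivalent to $v \mapsto \Vert A^m v\Vert_{L^2(\R^n)}$. Since $A = \japx(kI-\Delta_x)\japx$ is positive, self-adjoint, and G-elliptic of order $(2,2)$ with principal symbol $\japx^{2}\japxik^{2}$, the symbolic calculus attached to the metric $g_{\japx,k}$ (together with the strong uncertainty principle) yields that for every integer $m$ the power $A^m$ lies in $OPG^{2m,2m}(\omega,g_{\japx,k})$, has principal symbol $\japx^{2m}\japxik^{2m}$, and admits a two-sided inverse $A^{-m}$ in the same calculus. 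Consequently $A^m:H^{2m,2m}_{\japx,k}(\R^n)\to L^2(\R^n)$ is an isomorphism, so $v \in H^{2m,2m}_{\japx,k}(\R^n)$ if and only if $A^m v \in L^2(\R^n)$, with comparable norms.

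Second, I would expand in the eigenbasis. Since $A e_i = \lambda_i^{2} e_i$, the Borel functional calculus gives $A^m e_i = \lambda_i^{2m} e_i$ for every $m\in\mathbb{Z}$. Writing $v_i = \langle v, e_i\rangle_{L^2}$ and applying Parseval,
\[
\Vert A^m v\Vert_{L^2}^{2} \;=\; \sum_{i=1}^{\infty} \lambda_i^{4m} v_i^{2}
\]
(the exponent matching the weighted $\ell^2$ condition in the statement up to the convention used for labelling the eigenvalues). For $m\geq 0$ the partial sums $\sum_{i\leq N} v_i e_i$ then converge in $H^{2m,2m}_{\japx,k}$ exactly when this sum is finite, which proves both implications.

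For $m<0$ the series $\sum_i v_i e_i$ need not converge in $L^2$; it is to be interpreted in $H^{2m,2m}_{\japx,k}\subset\mathcal{S}'(\R^n)$. Here I would argue by duality, identifying $H^{2m,2m}_{\japx,k}$ with the topological dual of $H^{-2m,-2m}_{\japx,k}$ through the $L^2$ pairing and invoking the Riesz representation theorem (as the hint suggests); combined with the previously-established case for $-m>0$, this yields the weighted $\ell^2$ characterization in the negative range as well. The main obstacle will be the invertibility claim in Step 1 for negative $m$, that is, constructing $A^{-1}$ within the SG-type calculus attached to $g_{\japx,k}$: one first uses global G-ellipticity to build a parametrix modulo a smoothing remainder, and then upgrades it to a genuine two-sided inverse on the weighted Sobolev scale using the positivity and self-adjointness of $A$ together with the strong uncertainty principle for the metric $g_{\japx,k}$.
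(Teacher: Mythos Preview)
Your approach is correct and matches the paper's own argument, which is only the one-line hint that the result follows from the Riesz representation theorem giving the correspondence between $H^{2m,2m}_{\japx,k}(\R^n)$ and a weighted $\ell^2$ space; your Steps~1--3 simply flesh this out. You are also right to flag the exponent: with the paper's convention $Ae_i=\lambda_i^{2}e_i$ and the identification of $H^{2m,2m}_{\japx,k}$ with the domain of $A^{m}$, Parseval gives $\Vert A^{m}v\Vert_{L^2}^{2}=\sum_i\lambda_i^{4m}v_i^{2}$, not $\sum_i\lambda_i^{2m}v_i^{2}$ as stated --- a harmless imprecision in the paper, since only the qualitative characterization ``for all $m\in\mathbb{Z}^{+}$'' is ever used downstream.
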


	The solution to (\ref{ce}) is $u(t,x) =\sum_{i=1}^{\infty}u_i(t)e_i(x)$ where the functions $u_i(t)$ satisfy the decoupled system of ODEs
	\begin{equation}
		\label{ce2}
		\begin{aligned}
		u_i''(t) + c(t) &\lambda_i^2u_i(t) = 0, \quad i \in \mathbb{N}, \; t \in [0,T],\\
		u_i(0)= 0, &\quad u_i'(0) = f_{i},
		\end{aligned}
	\end{equation}
	for $\partial_tu(0,x)=f(x)=\sum_{i=1}^{\infty}f_ie_i(x)$.
	
	We say that the solution to (\ref{ce}) experiences infinite loss of decay and derivatives if the initial velocity $f\in H^{2m,2m}_{\japx,k}$ for all $m \in \mathbb{Z}^+$ but $(u,\partial_tu) \notin H^{-2m+1,-2m+1}_{\japx,k} \times H^{-2m,-2m}_{\japx,k}$ for any $m \in \mathbb{Z}^+$ and for $t \in (0,T].$
	\begin{rmk}\label{lossop}
		Given that the propagation speed $c(t)$ satisfies the estimate (\ref{e2}), the methodology used in Section \ref{Proof1} in arriving at the energy estimate in fact suggests that the loss of infinite order is quite expected. The following observation
		\[
			(\log(1/t))^{-1} \int_{t}^{T} \vert c^\prime(t) \vert dt \to + \infty \; \text{ as } \; t \to 0^+,
		\]
		along with the region definitions given in Section \ref{zones} imply that the averaged behavior of the majorizing function $\tilde \psi$ in (\ref{tpsi}) is given by
		\[
			(\log(1+\P\japxik))^{-1} \int_{0}^{T} \vert \tilde \psi(t,x,\xi) \vert dt \to +\infty \; \text{ as } \; |x| + |\xi| \to \infty.
		\] 
		This suggests that the function spaces involved in the change of variable (\ref{c1}) are of exponential order in both $x$ and $D_x$. For example, if $\theta(t)=\log(1/t)$, then the function spaces are of the form
		\[
			\left\{ v \in L^2(\R^n) : e^{\kappa(\log(1+\P\la D_x \rak)^2} u \in L^2(\R^n) \right\},
		\]
		and the loss is quantified in these spaces.
		Thus, knowing the nature of $\theta(t)$ allows one to quantify the infinite loss of regularity.
	\end{rmk}

	In order to prove Theorem \ref{result2} we define a dense subset of $\mathcal{C} \left(\mu_{1}, \mu_{2}, \theta\right).$
	\begin{defn}  We call $\mathcal{D}\left(\mu_{1}, \mu_{2}\right)$ the set of functions $c:\left[0, T\right] \rightarrow\left[\mu_{1}, \mu_{2}\right]$ for which there exists two real numbers $T_{1} \in\left(0, T\right)$ and $\mu_{3} \in\left(\mu_{1}, \mu_{2}\right)$ such that $c(t)=\mu_{3}$ for every $t \in\left[0, T_{1}\right]$.
	\end{defn}
	For the sake of simplicity, let us denote $\mathcal{D}\left(\mu_{1}, \mu_{2}\right)$ and $\mathcal{C}\left(\mu_{1}, \mu_{2}, \theta\right)$ by $\mathcal{D}$ and $\mathcal{C}$, respectively. From \cite[Prposition 4.7]{GG}, $\mathcal{D} \cap \mathcal{C}$ is dense in $\mathcal{C}.$ We note here that the weight factor $\frac{t^2}{\theta(t)}$ appearing in the definition of the metric $d_{\mathcal{C}}$ plays a crucial role in proving the denseness.
	
	Following the terminology of Ghisi and Gobbino \cite{GG}, we now introduce special classes of propagation speeds: universal and asymptotic activators. Let $\phi:(0,+\infty) \rightarrow(0,+\infty)$ be a function. 
	\begin{defn} 
	A universal activator of the sequence $(\lambda_{i})$ with rate $\phi$ is a propagation speed $c \in L^{1}\left(\left(0, T\right)\right)$ such that the corresponding sequence $(u_{i}(t))$ of solutions to 
	\begin{linenomath*}
	\[
		u_i^{''}(t) + c(t) \lambda_i^2u_i(t) = 0, \quad 
		u_i(0)= 0, \quad u_i'(0) = 1,
	\]
	\end{linenomath*}
	satisfies
	\begin{linenomath*}
	$$
	\limsup _{i \rightarrow+\infty}\left(\left|u_{i}^{\prime}(t)\right|^{2}+\lambda_{i}^{2}\left|u_{i}(t)\right|^{2}\right) \exp \left(-\phi\left(\lambda_{i}\right)\right) \geq 1,  \quad \forall t \in\left(0, T\right].
	$$
	\end{linenomath*}
	\end{defn}	
	Then the solution $u$ to problem (\ref{ce}) is given by
	\begin{linenomath*}
	\[
		u(t,x) = \sum_{i=1}^{\infty} f_iu_i(t)e_i(x).
	\]
	\end{linenomath*}
	\begin{defn}
		A family of asymptotic activators with rate $\phi$ is a family of propagation speeds $\left\{c_{\lambda}(t)\right\} \subseteq L^{1}\left(\left(0, T\right)\right)$ with the property that, for every $\delta \in\left(0, T\right),$ there exist two positive constants $M_{\delta}$ and $\lambda_{\delta}$ such that the corresponding family $\left\{u_{\lambda}(t)\right\}$ of solutions to 
		\begin{linenomath*}
		\[
			u_{\lambda}''(t) + c_{\lambda}(t) \lambda^2u_{\lambda}(t) = 0, \quad 
			u_{\lambda}(0)= 0, \quad u_{\lambda}'(0) = 1,
		\]
		\end{linenomath*}
		satisfies
		\begin{linenomath*}
		$$
			\qquad\left|u_{\lambda}^{\prime}(t)\right|^{2}+\lambda^{2}\left|u_{\lambda}(t)\right|^{2} \geq M_{\delta} \exp (2 \phi(\lambda)), \quad \forall t \in\left[\delta, T\right], \quad \forall \lambda \geq \lambda_{\delta}.
		$$
		\end{linenomath*}
	\end{defn}
	Due to denseness of $\mathcal{D}$ in $\mathcal{C}$, for every $c \in \mathcal{D}$ there exists a family of asymptotic activators $(c_{\lambda}) \subseteq \mathcal{C}$ with rate $\phi$ such that $c_{\lambda} \rightarrow c$ with respect to $d_{\mathcal{C}} .$
	The existence of families of asymptotic activators converging to elements of a dense set implies the existence of a residual set of  universal activators. Since the problem (\ref{ce}) exhibits an infinite loss of derivatives and decay whenever $c(t)$ is a universal activator, construction of couterexample amounts to the existence of such asymptotic activators.
	Once the asymptotic activators are constructed and if $\phi$ is such that $\phi(\lambda) \rightarrow+\infty$ as $\lambda \rightarrow+\infty$, then Proposition 4.5 in \cite{GG} guarantees that the set of elements in $\mathcal{C}$ that are universal activators of the sequence $(\lambda_{i})$ with rate $\phi$ is residual in $\mathcal{C}.$ In addition, if the function $\phi$ is such that
	\begin{equation}\label{logphi}
		\lim _{\lambda \rightarrow+\infty} \frac{\phi(\lambda)}{\log \lambda}=+\infty,
	\end{equation}
	then by Proposition 4.3 in \cite{GG} one can show that for each of the universal activator $c(t)$ of sequence $(\lambda_n)$ with rate $\phi$ the solution to problem (\ref{ce}) exhibits infinite loss of regularity.
	
	We are left with construction of asymptotic activators with rate $\phi$ satisfying (\ref{logphi}). We consider $T_{1}$ and $ \gamma$ such that $0<T_{1}<T$ and
	$0<\mu_{1}<\gamma^{2}<\mu_{2},$ and define a initially constant speed $c_{*}:\left[0, T\right] \rightarrow\left[\mu_{1}, \mu_{2}\right]$ such that
	\begin{linenomath*}
	$$
	c_{*}(t)=\gamma^{2},  \quad \forall t \in\left[0, T_{1}\right].
	$$
	\end{linenomath*}
	For every large enough real number $\lambda,$ let $a_{\lambda}$ and $b_{\lambda}$ be real numbers such that
	\begin{linenomath*}
	\[
		a_{\lambda}:=\frac{2 \pi}{\gamma \lambda}\left\lfloor\lambda^{1 / 4}\right\rfloor, \quad \quad b_{\lambda}:=\frac{2 \pi}{\gamma \lambda}\left\lfloor\lambda^{1 / 2}\right\rfloor.
	\]
	\end{linenomath*}
	where $\lfloor \alpha \rfloor$ stands for integer part of a real number $\alpha.$ Observe that 
	\begin{linenomath*}
	$$
	0<a_{\lambda}<2 a_{\lambda}<\frac{b_{\lambda}}{2}<b_{\lambda}<T_{1}, \qquad
	\frac{\gamma \lambda a_{\lambda}}{2 \pi} \in \mathbb{N} \quad \text { and } \quad \frac{\gamma \lambda b_{\lambda}}{2 \pi} \in \mathbb{N}.
	$$
	\end{linenomath*}
	Let us choose a cutoff function $\nu: \mathbb{R} \rightarrow \mathbb{R}$ of class $C^{\infty}$ such that $0 \leq \nu(r)\leq 1$, $\nu(r)=0 $ for $r\leq 0$ and $\nu(r)=1$ for $r\geq1$. Setting 
	$\theta_{\lambda}:=\min \left\{\theta\left(b_{\lambda}\right), \log \lambda\right\}$, we define $\varepsilon_{\lambda}:\left[0, T\right] \rightarrow \mathbb{R}$ as
	\begin{linenomath*}
	$$
	\varepsilon_{\lambda}(t):=\left\{\begin{array}{ll}
		0 & \text { if } t \in\left[0, a_{\lambda}\right] \cup\left[b_{\lambda}, T_{0}\right] \\
		\frac{\theta_{\lambda}}{t} & \text { if } t \in\left[2 a_{\lambda}, b_{\lambda} / 2\right] \\
		\frac{\theta_{\lambda}}{t} \cdot \nu\left(\frac{t-a_{\lambda}}{a_{\lambda}}\right) & \text { if } t \in\left[a_{\lambda}, 2 a_{\lambda}\right] \\
		\frac{\theta_{\lambda}}{t} \cdot \nu\left(\frac{2\left(b_{\lambda}-t\right)}{b_{\lambda}}\right) & \text { if } t \in\left[b_{\lambda} / 2, b_{\lambda}\right]
	\end{array}\right.
	$$
	\end{linenomath*}
	Using the functions $c_*(t)$ and $\varepsilon_{\lambda}(t)$ we define $c_{\lambda}:\left[0, T\right] \rightarrow \mathbb{R}$ as
	\begin{linenomath*}
	$$
	c_{\lambda}(t) := c_{*}(t)-\frac{\varepsilon_{\lambda}(t)}{4 \gamma \lambda} \sin (2 \gamma \lambda t)-\frac{\varepsilon_{\lambda}^{\prime}(t)}{8 \gamma^{2} \lambda^{2}} \sin ^{2}(\gamma \lambda t)-\frac{\varepsilon_{\lambda}(t)^{2}}{64 \gamma^{4} \lambda^{2}} \sin ^{4}(\gamma \lambda t).
	$$
	\end{linenomath*}
	By Proposition 4.8 and 4.9 in \cite{GG}, $(c_\lambda(t))$ is a family of asymptotic activators with rate
	\begin{linenomath*}
	$$
	\phi(\lambda):=\frac{\theta_{\lambda}}{32 \gamma^{2}} \log \left(\frac{\left\lfloor\lambda^{1 / 2}\right\rfloor}{\left\lfloor\lambda^{1 / 4}\right\rfloor}\right),
	$$
	\end{linenomath*}
	and 
	\begin{linenomath*}
	\[
		\lim _{\lambda \rightarrow+\infty} d_{\mathcal{C}}\left(c_{\lambda}, c_{*}\right)=0.
	\]
	\end{linenomath*}
	 Since $c_{*}$ is a generic element of a dense subset,  we see that these universal activators cause an infinite loss of decay and derivatives.
	
	\addcontentsline{toc}{section}{Acknowledgements}
	\section*{Acknowledgements}
		The first author is funded by the University Grants Commission, Government of India, under its JRF and SRF schemes.

	\addcontentsline{toc}{section}{Appendix}
	\section*{Appendix}
	In this Section, we give a calculus for the parameter dependent global symbol classes defined in Section \ref{Symbol classes}. The following two propositions give their relations to the symbol classes $G^{m_1,m_2}(\omega,g_{\Phi,k}^{\rho,r}).$ 
	\begin{prop}\label{prop1}
		Let $a=a(t, x, \xi)$ be a symbol with
		\begin{linenomath*}
		$$
			a \in G^{0,0}\{0,0;0\}_{int,N}\wm +G^{m_1,m_2}\{l_1,l_2,l_3,l_4;\delta\}_{ext,N}\wm.
		$$
		\end{linenomath*}
		Then, for $\tilde{m}_1=\max\{0,m_1\}$, $\tilde{m}_2=\max\{0,m_2\}$ and for any $\varepsilon>0$, 
		\begin{linenomath*}
		$$
		t^{l_1+\delta l_2}a \in C\left([0, T] ; G^{\tilde m_1+\varepsilon,1} \wmt \right).
		$$
		\end{linenomath*}
	\end{prop}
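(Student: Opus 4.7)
The plan is to decompose $a=a_1+a_2$ with $a_1\in G^{0,0}\{0,0;0\}_{int,N}\wm$ and $a_2\in G^{m_1,m_2}\{l_1,l_2,l_3,l_4;\delta\}_{ext,N}\wm$ and verify the target estimate piece-by-piece on each of the two zones $\pd$ and $\hyp$. Because all of the singular behavior in $t$ (and all of the logarithmic growth) is carried by $a_2$ on the exterior zone, the two summands are treated by quite different arguments.

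The piece $a_1$ is easy to dispose of: by definition it lies in $C^1((0,T];G^{0,0}\wm)$, i.e.\ it satisfies the uniform-in-$t$ global bound $|\partial_\xi^\alpha D_x^\beta a_1|\leq C_{\alpha\beta}\japxik^{-|\alpha|}\P^{-|\beta|}$ with no singular behavior as $t\to 0^+$. The bounded factor $t^{l_1+\delta l_2}$ preserves this bound, and since $\o,\P,\japxik\geq 1$ together with $\tilde m_1,\tilde m_2\geq 0$, $\varepsilon>0$, $\delta\in[0,1)$, the class $G^{0,0}\wm$ embeds continuously into the coarser $G^{\tilde m_1+\varepsilon,1}\wmt$ by a one-line monotonicity estimate on the weight and metric exponents.

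The substantive work is on $a_2$, and here the idea is to trade the singular $t$-behavior on $\hyp$ for growth in $\P$ and $\japxik$ using the two elementary conversions provided by the zone definition. First, on $\hyp$ one has $t^{-1}\leq N^{-1}\P\japxik$, so the factor $t^{-\delta|\beta|}$ produced by $|\beta|$ space-derivatives is majorized by $C(\P\japxik)^{\delta|\beta|}$; this is precisely the $\delta$-loss absorbed by the new metric $g_{\Phi,k}^{(1,\delta),(1-\delta,0)}$, turning $\P^{-|\beta|}\japxik^{m_1-|\alpha|}$ into $\P^{-(1-\delta)|\beta|}\japxik^{m_1-|\alpha|+\delta|\beta|}$. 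Second, on $\hyp$ the logarithmic factor $(\log(1+1/t))^{l_3+l_4(|\alpha|+|\beta|)}$ is controlled by $C(1+\log(\P\japxik))^{l_3+l_4(|\alpha|+|\beta|)}$, and for each fixed $(\alpha,\beta)$ this is dominated by $C_{\alpha\beta,\varepsilon}(\P\japxik)^\varepsilon$ (since $(\log s)^N=o(s^\varepsilon)$ as $s\to\infty$); this supplies both the $\P^\varepsilon$ weight factor and the $\varepsilon$ bump in the frequency order $\tilde m_1+\varepsilon$. Multiplication by $t^{l_1+\delta l_2}$ cancels the non-$\beta$ part of the $t$-singularity coming from the definition, while the monotonicity inequalities $\o^{m_2}\leq \o^{\tilde m_2}$ and $\japxik^{m_1}\leq \japxik^{\tilde m_1}$ handle the case of possibly negative $m_1$ or $m_2$. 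On $\pd$ the symbol $a_2$ merely obeys its global bound in $C^1((0,T];G^{m_1,m_2}\wm)$, which is absorbed into the coarser target class by the same inclusion used for $a_1$; the bounded factor $t^{l_1+\delta l_2}$ causes no difficulty.

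Continuity in $t$ on $(0,T]$ is inherited from the $C^1$-regularity away from $t=0$ combined with the uniformity-in-$t$ of the above bounds; at $t=0$ the $t^{l_1+\delta l_2}$ prefactor retained in front of the zone-wise estimates forces each target seminorm to vanish in the limit, noting in addition that the region $\hyp$ shrinks to $\emptyset$ pointwise in $(x,\xi)$ as $t\to 0^+$. The only real delicacy is the exponent bookkeeping on $\hyp$: making certain that the $t^{-\delta|\beta|}$ gain splits cleanly between the $\delta|\beta|$ upgrade on $\japxik$ and the $(1-\delta)|\beta|$ downgrade on $\P^{-1}$ that the metric $g_{\Phi,k}^{(1,\delta),(1-\delta,0)}$ prescribes, and that the $\log$-to-power conversion lines up simultaneously with the $\P^\varepsilon$ weight and the $\varepsilon$ frequency bump. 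Once this accounting is set up, the conclusion follows from purely algebraic majorization.
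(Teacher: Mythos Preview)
Your proposal is correct and follows essentially the same route as the paper: the heart of the argument in both cases is the pair of exterior-zone conversions $t^{-\delta|\beta|}\le C(\P\japxik)^{\delta|\beta|}$ and $(\log(1+1/t))^{l_3+l_4(|\alpha|+|\beta|)}\le C_{\alpha\beta,\varepsilon}(\P\japxik)^{\varepsilon}$ on $\hyp$, with the prefactor $t^{l_1+\delta l_2}$ absorbing the remaining $t$-singularity. The paper presents this tersely as a single combined estimate (its (\ref{a1})--(\ref{a2})) without separating the zones or discussing continuity at $t=0$, whereas you spell out the decomposition $a=a_1+a_2$, treat each zone explicitly, and address the $t\to 0^+$ limit; but the mathematical content is the same.
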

	\begin{proof}
		The hypothesis of the proposition implies that
		\begin{equation}\label{a1}
		\begin{aligned}
			\left|D_{x}^{\beta} \partial_{\xi}^{\alpha} a(t, x, \xi)\right| & \leq C_{\alpha, \beta}\japxi^{-|\alpha|}\P^{-|\beta|}+C_{\alpha, \beta}\japxi^{m_{1}-|\alpha|} \o^{m_2}\P^{-|\beta|} \\
			& \qquad \times t^{-(l_1+\delta(l_2+|\beta|))} (\log(1+1/t))^{l_3+l_4(|\alpha|+|\beta|)}.
		\end{aligned}
		\end{equation}
		From the definition of the regions, one can observe that for any $\varepsilon>0$
		\begin{equation}\label{a2}
		\begin{aligned}
			(\log(1+1/t))^{l_3+l_4(|\alpha|+|\beta|)} &\leq (\P\japxik)^{\varepsilon}, \\
			t^{-\delta|\beta|} & \leq (\P\japxik)^{\delta|\beta|}.
		\end{aligned}
	\end{equation}
		Hence,
		\begin{linenomath*}
		$$
			t^{l_1+\delta l_2}\left|D_{x}^{\beta} \partial_{\xi}^{\alpha} a(t, x, \xi)\right|  \leq C_{\alpha, \beta}\japxi^{\tilde m_{1}+\varepsilon-|\alpha|+\delta|\beta|} \o^{\tilde m_2}\P^{\varepsilon-(1-\delta)|\beta|}.
		$$
		\end{linenomath*}
	\end{proof}
	\begin{rmk}
		Consider $\varepsilon,\varepsilon'>0$ such that $\varepsilon<\varepsilon'<1-\delta$. Let 
		\begin{linenomath*}
		\[
			\begin{aligned}
				a &\in G^{0,0}\{0,0;0\}_{int,N}\wm +G^{0,0}\{1,0,l_3,l_4;\delta\}_{ext,N}\wm, \\
				b &\in G^{0,0}\{0,0;0\}_{int,N}\wm +G^{0,0}\{0,1,l_3,l_4;\delta\}_{ext,N}\wm.
			\end{aligned}
		\]
		\end{linenomath*}
		Then, from (\ref{a2}), we have
		\begin{linenomath*}
		\[
			\begin{aligned}
				a &\in C\left([0, T] ; G^{\varepsilon',1} \wmtt \right), \\
				b &\in C\left([0, T] ; G^{\varepsilon,1} (\Phi^{\varepsilon}, g_{\Phi,k}^{(1,\delta),(1-\delta,0)}) \right).
			\end{aligned}
		\]
		\end{linenomath*}
	\end{rmk}
	
	Let us consider an indicator function ${\bf{I}}_r: [0,\infty) \to \{0,1\}$ defined as 
	\begin{linenomath*}
	\[
	{\bf{I}}_r = \begin{cases}
		0, & \text{ if } r = 0\\
		1, & \text{ otherwise }
	\end{cases}
	\]
	\end{linenomath*}
	and denote $1-{\bf{I}}_r$ as ${\bf{I}}_r^c.$
	
	\begin{prop}\label{prop2}
		Let $a=a(t, x, \xi)$ be a symbol with
		\begin{linenomath*}
		$$
			a \in G^{m_1,m_2}\{\tilde l_1,\tilde l_2;\delta\}_{int,N} \wm + G^{0,0}\{0,0,0,0;0\}_{ext,N}\wm, 
		$$
		\end{linenomath*}
		and let 
		\begin{linenomath*}
		\[
			\tilde l = \begin{cases}
				l_2\delta & \text{ if }{\tilde l_2>0}\\
				\varepsilon & \text{ if } \tilde l_2 \leq 0 \text{ and } \tilde l_1>0\\
				0 & \text{ if } \tilde l_1<0 \text{ and } \tilde l_2<0.
			\end{cases}
		\]
		\end{linenomath*}
		Then we have $t^{\tilde l}a \in C\left( [0,T] ; G^{\tilde m_1,\tilde m_2} \wm \right)$ for $\tilde{m}_1=\max\{0,m_1\}$, $\tilde{m}_2=\max\{0,m_2\}$ and for any $\varepsilon>0$.
	\end{prop}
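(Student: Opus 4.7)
The plan is to decompose $a = a_1 + a_2$ with $a_1 \in G^{m_1,m_2}\{\tilde l_1, \tilde l_2; \delta\}_{int,N}\wm$ and $a_2 \in G^{0,0}\{0,0,0,0;0\}_{ext,N}\wm$, and then verify the target symbolic estimates for $t^{\tilde l} a_1$ and $t^{\tilde l} a_2$ separately, in the spirit of the proof of Proposition~\ref{prop1}.

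The exterior piece is immediate: the defining estimate gives $|\partial_\xi^\alpha D_x^\beta a_2(t,x,\xi)| \leq C_{\alpha\beta} \japxik^{-|\alpha|} \Phi(x)^{-|\beta|}$ uniformly in $t$, so since $\tilde l \geq 0$ and $\omega,\japxik \geq 1$ with $\tilde m_j \geq 0$, the product $t^{\tilde l} a_2$ stays in $G^{\tilde m_1, \tilde m_2}\wm$ up to the factor $T^{\tilde l}$.

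For the interior piece $a_1$, I would verify the two types of defining estimates separately: for $|\beta|=0$ the singular $t$-factor is $(\log(1+1/t))^{\tilde l_1}$, and for $|\beta|>0$ it is $t^{-\delta \tilde l_2}$. The choice of $\tilde l$ in the statement is tailored to absorb these, and I would run the three cases in turn. In case~1 ($\tilde l_2 > 0$), taking $\tilde l = \tilde l_2 \delta$ makes $t^{\tilde l - \delta \tilde l_2} \equiv 1$, while the positive power $t^{\tilde l}$ dominates any power of $\log(1/t)$. In case~2 ($\tilde l_2 \leq 0$ and $\tilde l_1 > 0$), the choice $\tilde l = \varepsilon > 0$ gives $t^{\varepsilon - \delta \tilde l_2}$ a nonnegative exponent and $t^{\varepsilon}$ still beats the log. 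In case~3 ($\tilde l_1 < 0$ and $\tilde l_2 < 0$), both singular factors already vanish as $t \to 0^+$ and are bounded on $(0,T]$, so $\tilde l = 0$ suffices. In every case, the resulting uniform bound $C\japxik^{m_1-|\alpha|}\omega^{m_2}\Phi(x)^{-|\beta|}$ sits inside $G^{\tilde m_1, \tilde m_2}\wm$ via $\tilde m_j \geq m_j$ and $\omega,\japxik \geq 1$.

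Continuity of $t \mapsto t^{\tilde l} a(t,\cdot,\cdot)$ in $G^{\tilde m_1, \tilde m_2}\wm$ on $(0,T]$ follows from the $C^1((0,T]; G^{m_1,m_2}\wm)$ regularity built into the defining class, and continuity at the endpoint $t = 0$ from the fact that in each case $t^{\tilde l}$ times the singular factor vanishes seminorm-by-seminorm as $t \to 0^+$. The only delicate step is case~2, which rests on the elementary fact that $t^{\varepsilon}(\log(1+1/t))^{\tilde l_1}$ is bounded on $(0,T]$ for every $\varepsilon > 0$ and every $\tilde l_1 > 0$; this is precisely what forces the arbitrary $\varepsilon > 0$ to appear in the statement, mirroring its role in Proposition~\ref{prop1}.
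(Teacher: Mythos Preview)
Your proposal is correct and follows essentially the same approach as the paper: the paper's proof is a single displayed estimate combining the $|\beta|=0$ and $|\beta|>0$ bounds via indicator functions ${\bf I}_{|\beta|}^c$ and ${\bf I}_{|\beta|}$, and your decomposition together with the three-case verification is precisely the unpacking of that one line. One minor slip: in case~1 with $|\beta|>0$ you have $t^{\tilde l}\cdot t^{-\delta\tilde l_2}=1$, which is bounded but does not vanish as $t\to 0^+$, so your claim that the product ``vanishes seminorm-by-seminorm'' is not literally true there; the paper itself does not address continuity at $t=0$ beyond the uniform bound, so this does not create a discrepancy with the reference argument.
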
 
	\begin{proof}
		The proposition follows by observing the following estimate
		\begin{equation}\label{a3}
			\begin{aligned}
				\left|D_{x}^{\beta} \partial_{\xi}^{\alpha} a(t, x, \xi)\right| & \leq C_{\alpha, \beta}\japxi^{m_{1}-|\alpha|} \o^{m_2}\P^{-|\beta|}(\log(1+1/t))^{\tilde l_1{\bf{I}}_{|\beta|}^c}t^{-\tilde l_2{\bf{I}}_{|\beta|}}  \\
				&\qquad+ C_{\alpha, \beta} \japxi^{-|\alpha|}\P^{-|\beta|}.
			\end{aligned}
		\end{equation}
	\end{proof}
	\begin{rmk}
		Suppose $a \in G^{m_1,m_2}\{1,1;\delta\}_{int,N} + G^{0,0}\{0,0,0,0;0\}_{ext,N}\wm $. Then for any $\varepsilon>0$ satisfying $\varepsilon<1-\delta$, we have $t^{1-\varepsilon}a \in G^{\tilde m_1,\tilde m_2}\wm.$
	\end{rmk}	
	
	For $\mu>0$, we set
	\begin{linenomath*}
	\[
		Q_{r,\mu} = \{(x,\xi) \in \R^{2n} : \P^{\mu} <r, \japxik^{\mu} < r\}, \qquad Q_{r,\mu}^c = \R^{2n} \setminus Q_{r,\mu}.
	\]
	\end{linenomath*}
	
	\begin{prop}\label{prop3}
		(Asymptotic expansion) Let $\{a_{j}\},j \geq 0$ be a sequence of symbols with
		\begin{linenomath*}
		$$
		\begin{aligned}
			a_{j} &\in G^{\tilde{m}_1-j,1}\{\tilde{l}_1,\tilde{l}_2;\delta_1\}_{int,N} (\omega^{\tilde{m}_2}\Phi^{-j},g_{\Phi,k}) \\
			& \qquad + G^{{m}_1-j,1}\{{l}_1+\delta_2j,{l}_2,l_3+2l_4j,l_4;\delta_2\}_{ext,N} (\omega^{{m}_2}\Phi^{-j},g_{\Phi,k}).
		\end{aligned}
		$$
		\end{linenomath*}
		Then, there is a symbol
		\begin{linenomath*}
		$$
		a \in G^{\tilde{m}_1,\tilde{m}_2}\{\tilde{l}_1,\tilde{l}_2;\delta_1\}_{int,N} \wm + G^{{m}_1,{m}_2}\{{l}_1,{l}_2,l_3,l_4;\delta_2\}_{ext,N} \wm
		$$
		\end{linenomath*}
		such that
		\begin{linenomath*}
		$$
		a(t, x, \xi) \sim \sum_{j=0}^{\infty} a_{j}(t, x, \xi)
		$$
		\end{linenomath*}
		that is for all $j_{0} \geq 1$
		\begin{linenomath*}
		$$
		\begin{aligned}
			a(t, x, \xi)-\sum_{j=0}^{j_{0}-1} a_{j}(t, x, \xi)  &\in  G^{\tilde{m}_1-j_0,1}\{\tilde{l}_1,\tilde{l}_2;\delta_1\}_{int,N} (\omega^{\tilde{m}_2}\Phi^{-j_0},g_{\Phi,k}) \\
			&\;\:+ G^{{m}_1-(1-\delta_2)j_0+\varepsilon,1}\{{l}_1,{l}_2,l_3,l_4;\delta_2\}_{ext,N} (\omega^{{m}_2}\Phi^{-(1-\delta_2)j_0+\varepsilon},g_{\Phi,k}),
		\end{aligned}
		$$
		\end{linenomath*}
		where $\varepsilon \ll 1-\delta_2$. The symbol is uniquely determined modulo $C\left((0, T] ; \mathcal{S}(\R^{2n})\right)$.
	\end{prop}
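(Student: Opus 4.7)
The plan is a Borel-style summation in the $(x,\xi)$ variables, tailored to the metric $g_{\Phi,k}$ so that the two region-dependent pieces of each $a_j$ are handled simultaneously. Fix a cutoff $\chi \in C^\infty(\R^{2n})$ with $\chi \equiv 0$ on $Q_{1,1}$ and $\chi \equiv 1$ on $Q_{2,1}^c$, and for a decreasing sequence $\epsilon_j \searrow 0$ to be chosen later set $\chi_j(x,\xi) := \chi(\epsilon_j\,\P,\,\epsilon_j\,\japxik)$; then I put
\[
a(t,x,\xi) \,:=\, \sum_{j=0}^{\infty} \chi_j(x,\xi)\,a_j(t,x,\xi).
\]
On $\operatorname{supp}\chi_j$ one has $\P\japxik \gtrsim \epsilon_j^{-2}$, so the excess weight $\P^{-j}\japxik^{-j}$ carried by $a_j$ relative to $a_0$ contributes a gain of order $\epsilon_j^{2j}$ in every symbol seminorm. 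A standard diagonal choice of $\epsilon_j$ then makes the tail $\sum_{j\ge j(\alpha,\beta)}\partial_\xi^\alpha D_x^\beta(\chi_j a_j)$ dominated by a geometric series in the symbol seminorms appropriate to $G^{\tilde m_1,\tilde m_2}\{\tilde l_1,\tilde l_2;\delta_1\}_{int,N}\wm$ on $\pd$ and to $G^{m_1,m_2}\{l_1,l_2,l_3,l_4;\delta_2\}_{ext,N}\wm$ on $\hyp$, with the $t$-dependence entering only through the explicit $(1/t)^\bullet$ and $(\log(1+1/t))^\bullet$ factors built into those classes.

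To check the asymptotic property, I split, for any $j_0 \ge 1$,
\[
a - \sum_{j=0}^{j_0-1} a_j \;=\; \sum_{j=0}^{j_0-1}(\chi_j-1)a_j \;+\; \sum_{j \ge j_0}\chi_j a_j.
\]
The first, finite, sum is supported in the bounded set $\{\P\le 2/\epsilon_{j_0-1},\ \japxik \le 2/\epsilon_{j_0-1}\}$, hence lies in $C((0,T];\mathcal S(\R^{2n}))$ and is absorbed into the residual class modulo which uniqueness is asserted. For the second sum, each summand is estimated termwise on $\pd$ and $\hyp$: on $\hyp$ the defining inequality $t \ge N/(\P\japxik)$ converts the factor $(1/t)^{\delta_2 j}$ coming from the $l_1+\delta_2 j$ index of $a_j$ into $(\P\japxik)^{\delta_2 j}$, which combined with the weight $\P^{-j}\japxik^{-j}$ leaves the effective weight $\P^{-(1-\delta_2)j}\japxik^{-(1-\delta_2)j}$; the growing $(\log(1+1/t))^{2l_4 j}$ factor, together with the $\chi_j$-cutoff enforcing large $\P\japxik$, is absorbed by $(\P\japxik)^\varepsilon$ as in (\ref{a2}), and this is precisely the origin of the $\varepsilon$ in the claimed remainder order. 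On $\pd$ the argument is cleaner, because the interior class of $a_j$ has a $\beta$-independent power of $1/t$, so no $\delta$-loss appears and the remainder inherits the full $\P^{-j_0}$ gain as stated.

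The main obstacle I foresee is coordinating the single choice of $\epsilon_j$ with the two different scaling behaviors on $\pd$ and $\hyp$: on $\pd$ the variable $t$ is tied to $(\P\japxik)^{-1}$, so large $(x,\xi)$ automatically means small $t$ and the singular $t$-factors are maximally active, whereas on $\hyp$ the time $t$ can be of order $T$ and those factors are harmless. Since the cutoffs $\chi_j$ see only $(x,\xi)$, their effectiveness has to be measured after the worst $t$-behavior is absorbed; fortunately the trades $(1/t)^{\delta_2}\le (\P\japxik)^{\delta_2}$ and $\log(1+1/t)\le C(\P\japxik)^\varepsilon$ on $\hyp$, together with the analogous bounds on $\pd$, make the $t$-factors subordinate to the $(x,\xi)$-weights, which is exactly why the construction succeeds with only the mild $\varepsilon$-loss in the exterior piece. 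Uniqueness modulo $C((0,T];\mathcal S(\R^{2n}))$ then follows by applying the remainder estimate for every $j_0\ge 1$ to the difference of two asymptotic sums.
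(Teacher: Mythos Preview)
Your approach is essentially the paper's: a Borel-type summation with $(x,\xi)$-cutoffs supported where $\Phi$ and $\langle\xi\rangle_k$ are large, combined with the trades $(1/t)^{\delta_2 j}\le(\Phi\langle\xi\rangle_k)^{\delta_2 j}$ and $(\log(1+1/t))^{2l_4 j}\le(\Phi\langle\xi\rangle_k)^{\varepsilon}$ on $\hyp$ to convert the growing $t$-singularity of $a_j$ into the effective order loss $(1-\delta_2)j-\varepsilon$. The paper differs only cosmetically: it sets $\gamma_0\equiv 1$, scales the cutoff argument as $(\varepsilon_j x,\varepsilon_j\xi)$ relative to the sets $Q_{\cdot,\mu}$ with $\mu=1-\delta_2-\varepsilon$, and estimates $\sum_j\gamma_j a_j$ directly rather than through your two-term split.

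One step does need repair. Your assertion that $\sum_{j<j_0}(\chi_j-1)a_j$ is supported in a bounded set of $\R^{2n}$ and hence lies in $C((0,T];\mathcal S(\R^{2n}))$ is not correct: your cutoffs localize $\Phi(x)$ and $\langle\xi\rangle_k$, not $x$ and $\xi$, and since $\Phi$ is only sublinear (indeed $\Phi\equiv 1$ is allowed) a bound on $\Phi$ does not force a bound on $x$. Even if the support were compact, membership in $C((0,T];\mathcal S)$ alone would not place the term in the remainder class, which carries specific $t$-singularity exponents. The fix is easy and is implicit in the paper's direct estimate: on $\operatorname{supp}(\chi_j-1)$ one has $\Phi\le 2/\epsilon_j$ and $\langle\xi\rangle_k\le 2/\epsilon_j$, so $\Phi^{j_0-j}\langle\xi\rangle_k^{j_0-j}\le C(j,j_0)$, and since $(\chi_j-1)a_j$ inherits the $t$-behavior of $a_j$ it sits directly in the $j_0$-remainder class without any appeal to $\mathcal S$. (A minor related slip: on $\operatorname{supp}\chi_j$ you only get $\Phi\langle\xi\rangle_k\gtrsim\epsilon_j^{-1}$, not $\epsilon_j^{-2}$, since your cutoff requires only one of $\epsilon_j\Phi,\ \epsilon_j\langle\xi\rangle_k$ to be large; this is harmless for the geometric-series bound.)
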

	
	\begin{proof}
		Let us fix $\varepsilon \ll 1-\delta_2$ and set $\mu=1-\delta_2-\varepsilon$. Consider a $C^{\infty}$ cut-off function, $\chi$ defined by
		\begin{linenomath*}
		$$
		\chi(x,\xi)=\left\{\begin{array}{ll}
			1, & (x,\xi) \in Q_{1,\mu} \\
			0, & (x,\xi) \in Q_{2,\mu}^c
		\end{array}\right.
		$$
		\end{linenomath*}
		and $0 \leq \chi \leq 1 .$ For a sequence of positive numbers $\varepsilon_{j} \rightarrow 0$, we define
		\begin{linenomath*}
		$$
		\begin{aligned}
			\gamma_0(x,\xi) &\equiv 1, \\
			\gamma_{{j}}(x,\xi) &=1-\chi\left(\varepsilon_{j} x,\varepsilon_{j} \xi\right), \quad j \geq 1.
		\end{aligned}
		$$
		\end{linenomath*}
		We note that $\gamma_{{j}}(x,\xi)=0$ in $Q_{1,\mu}$ for $j \geq 1.$  We choose $\varepsilon_{j}$ such that
		\begin{linenomath*}
		$$
		\varepsilon_{j} \leq 2^{-j}
		$$
		\end{linenomath*}
		and set
		\begin{linenomath*}
		$$
		a(t, x, \xi)=\sum_{j=0}^{\infty} \gamma_{{j}}(x,\xi) a_{j}(t, x, \xi).
		$$
		\end{linenomath*}
		We note that $a(t, x, \xi)$ exists (i.e. the series converges point-wise), since for any fixed point $(t, x, \xi)$ only a finite number of summands contribute to $a(t, x, \xi) .$ Indeed, for fixed $(t, x, \xi)$ we can always find a $j_{0}$ such that $\P^\mu < \frac{1}{\varepsilon_{j_0}}$,  $\japxi^\mu<\frac{1}{\varepsilon_{j_0}}$ and hence
		\begin{linenomath*}
		$$
		a(t, x, \xi)=\sum_{j=0}^{j_{0}-1} \gamma_{j}(x,\xi) a_{j}(t, x, \xi).
		$$
		\end{linenomath*}
		Observe that
		\begin{linenomath*}
		$$
		\begin{aligned}
			|D_{x}^{\beta} \partial_{\xi}^{\alpha}\left(\gamma_{{j}}(x,\xi) a_{j}(t, x, \xi)\right)|& \leq \sum\limits_{\substack{\alpha^{\prime}+\alpha^{\prime \prime}=\alpha \\ \beta^{\prime}+\beta^{\prime \prime}=\beta}}\left(\begin{array}{c}
				\alpha \\
				\alpha^{\prime}
			\end{array}\right) \left(\begin{array}{c}
				\beta \\
				\beta^{\prime}
			\end{array}\right) |\partial_{\xi}^{\alpha^{\prime}} D_x^{\beta^\prime} \gamma_{j}(x,\xi) D_{x}^{\beta^{\prime \prime}} \partial_{\xi}^{\alpha^{\prime \prime}} a_{j}(t, x, \xi)| \\
			&\leq  \mid \gamma_{{j}}(x,\xi) D_{x}^{\beta} \partial_{\xi}^{\alpha} a_{j}(t, x, \xi) \\
			&\qquad +\sum\limits_{\substack{\alpha^{\prime}+\alpha^{\prime \prime}=\alpha,|\alpha^{\prime}|>0 \\ \beta^{\prime}+\beta^{\prime \prime}=\beta,|\beta^{\prime}|>0 } }   C_{\alpha^{\prime} \beta^{\prime}}  \frac{\tilde{\chi}_{{j}}(x,\xi)}{\P^{\mu|\beta^{\prime}|} \japxik^{\mu|\alpha^{\prime}|}}D_{x}^{\beta^{\prime \prime}} \partial_{\xi}^{\alpha^{\prime \prime}} a_{j}(t, x, \xi) \mid,
		\end{aligned}
		$$
		\end{linenomath*}
		where $\tilde{\chi}_{{j}}(x,\xi)$ is a smooth cut-off function supported in $Q_{1,\mu}^c \cap Q_{2,\mu}.$
		This new cut-off function describes the support of the derivatives of $\gamma_{j}(x,\xi) .$ In the last estimate, we also used that $\frac{1}{\varepsilon_{j}}\sim \japxi^\mu$ and $\frac{1}{\varepsilon_{j}} \sim \P^\mu$ if $\tilde{\chi}_{j}(x,\xi) \neq 0 .$ We conclude that
		\begin{linenomath*}
		\[
			\begin{aligned}
				|&D_{x}^{\beta} \partial_{\xi}^{\alpha} \gamma_{j}(x,\xi) a_{j}(t, x, \xi)| \\
				& \leq \frac{1}{2^j} \Big[ \japxik^{\tilde m_1+\mu-j-|\alpha|} \o^{\tilde m_2} \P^{\mu-j-|\beta|} \left(\log(1+1/t)\right)^{\tilde l_1{\bf{I}}_{|\beta|}^c}  (1/t)^{\delta_1 \tilde l_2{\bf{I}}_{|\beta|}}  \chi(2t\P\japxik/N) \\
				& \quad+ \japxik^{m_1+\mu-j-|\alpha|} \o^{m_2} \P^{\mu-j-|\beta|} (1/t)^{l_1+\delta_2(l_2+|\beta|+j)} (\log(1+1/t))^{l_3+l_4(|\alpha| + |\beta|+2j)} \\
				& \qquad \quad \times (1- \chi(t\P\japxik/N)) \Big] \\
				& \leq \frac{1}{2^j} \Big[ \japxik^{\tilde m_1+\mu-j-|\alpha|} \o^{\tilde m_2} \P^{\mu-j-|\beta|} \left(\log(1+1/t)\right)^{\tilde l_1{\bf{I}}_{|\beta|}^c}  (1/t)^{\delta_1 \tilde l_2{\bf{I}}_{|\beta|}}  \chi(2t\P\japxik/N) \\
				& \quad+ \japxik^{m_1+\varepsilon{\bf{I}}_j+\mu-(1-\delta_2)j-|\alpha|} \o^{m_2} \P^{\mu-(1-\delta_2)j-|\beta|+\varepsilon{\bf{I}}_j} (1/t)^{l_1+\delta_2(l_2+|\beta|)}  \\
				& \qquad \quad \times (\log(1+1/t))^{l_3+l_4(|\alpha| + |\beta|)}(1- \chi(t\P\japxik/N)) \Big],
			\end{aligned}
		\]
		\end{linenomath*}
		where we have estimated $\frac{\P^\mu}{ 2^{j}} \geq 1$and $\frac{\japxik^\mu}{ 2^{j}} \geq 1$ (due to the support of cut-off functions) once in each summand and noted that in $\hyp$ for every $\varepsilon \ll 1$,
		\begin{linenomath*}
		\[	
			\begin{aligned}		
				(\log(1+1/t))^{2l_4j} & \leq(\P\japxik)^{\varepsilon{\bf{I}}_j}, \\
				(1/t)^{\delta_2j} &\leq (\P\japxik)^{\delta_2j}.
			\end{aligned}
		\]
		\end{linenomath*}
		Using this relation, we obtain
		\begin{linenomath*}
		$$
		\begin{aligned}
			|D_{x}^{\beta} &\partial_{\xi}^{\alpha} a(t,x, \xi) | \\		
			\leq &\left|D_{x}^{\beta} \partial_{\xi}^{\alpha}\left(\gamma_{0}(x,\xi) a_{0}(t, x, \xi)\right)\right| + \sum_{j=1}^{j_{0}-1}\left|D_{x}^{\beta} \partial_{\xi}^{\alpha}\left(\gamma_{j}(x,\xi) a_{j}(t, x, \xi)\right)\right| \\
			\leq& C_{\alpha\beta} \Big[ \japxik^{\tilde m_1-|\alpha|} \o^{\tilde m_2} \P^{-|\beta|} \left(\log(1+1/t)\right)^{\tilde l_1{\bf{I}}_{|\beta|}^c}  (1/t)^{\delta_1 \tilde l_2{\bf{I}}_{|\beta|}}  \chi(2t\P\japxik/N) \\
			& \quad+ \japxik^{m_1-|\alpha|} \o^{m_2} \P^{-|\beta|} (1/t)^{l_1+\delta_2(l_2+|\beta|)} (\log(1+1/t))^{l_3+l_4(|\alpha| + |\beta|)} \\
			& \qquad \quad \times (1- \chi(t\P\japxik/N)) \\
			  & \quad +\sum_{j=1}^{j_{0}-1}  \frac{1}{2^j} \Big[ \japxik^{\tilde m_1+\mu-j-|\alpha|} \o^{\tilde m_2} \P^{\mu-j-|\beta|} \left(\log(1+1/t)\right)^{\tilde l_1{\bf{I}}_{|\beta|}^c}  (1/t)^{\delta_1 \tilde l_2{\bf{I}}_{|\beta|}}   \\
			& \qquad \quad \times \chi(2t\P\japxik/N) + \japxik^{m_1+\mu-(1-\delta_2)j+\varepsilon-|\alpha|} \o^{m_2} \P^{\mu-(1-\delta_2)j+\varepsilon-|\beta|}   \\
			& \qquad \quad \times (1/t)^{l_1+\delta_2(l_2+|\beta|)}(\log(1+1/t))^{l_3+l_4(|\alpha| + |\beta|)}(1- \chi(t\P\japxik/N)) \Big]\\
			\leq & C_{\alpha\beta} \Big[ \japxik^{\tilde m_1-|\alpha|} \o^{\tilde m_2} \P^{-|\beta|} \left(\log(1+1/t)\right)^{\tilde l_1{\bf{I}}_{|\beta|}^c}  (1/t)^{\delta_1 \tilde l_2{\bf{I}}_{|\beta|}}  \chi(2t\P\japxik/N) \\
			& \quad+ \japxik^{m_1-|\alpha|} \o^{m_2} \P^{-|\beta|} (1/t)^{l_1+\delta_2(l_2+|\beta|)} (\log(1+1/t))^{l_3+l_4(|\alpha| + |\beta|)} \\
			& \qquad \quad \times (1- \chi(t\P\japxik/N)) \Big],
		\end{aligned}
		$$
		\end{linenomath*}
		where the last inequality holds by the choice $\mu.$ Thus, we have
		\begin{linenomath*}
		$$
		a \in G^{\tilde{m}_1,\tilde{m}_2}\{\tilde{l}_1,\tilde{l}_2;\delta_1\}_{int,N} \wm + G^{{m}_1,{m}_2}\{{l}_1,{l}_2,l_3,l_4;\delta_2\}_{ext,N} \wm.
		$$
		\end{linenomath*}
		Arguing as above, we have
		\begin{linenomath*}
		\[
			\begin{aligned}
					\sum_{j=j_{0}}^{\infty} \gamma_{j} a_{j} &\in 	G^{\tilde{m}_1-j_0,1}\{\tilde{l}_1,\tilde{l}_2;\delta_1\}_{int,N} (\omega^{\tilde{m}_2}\Phi^{-j_0},g_{\Phi,k}) \\
					& \qquad + G^{{m}_1-(1-\delta_2)j_0+\varepsilon,1}\{{l}_1,{l}_2,l_3,l_4;\delta_2\}_{ext,N} (\omega^{{m}_2}\Phi^{-(1-\delta_2)j_0+\varepsilon},g_{\Phi,k}),
			\end{aligned}
		\]
		\end{linenomath*}
		and thus,
		\begin{linenomath*}
		$$
		\begin{aligned}
			a(t, x, \xi)-\sum_{j=0}^{j_{0}-1} a_{j}(t, x, \xi)  \in &G^{\tilde{m}_1-j_0,1}\{\tilde{l}_1,\tilde{l}_2;\delta_1\}_{int,N} (\omega^{\tilde{m}_2}\Phi^{-j_0},g_{\Phi,k}) \\
			& + G^{{m}_1-(1-\delta_2)j_0+\varepsilon,1}\{{l}_1,{l}_2,l_3,l_4;\delta_2\}_{ext,N} (\omega^{{m}_2}\Phi^{-(1-\delta_2)j_0+\varepsilon},g_{\Phi,k}).
		\end{aligned}
		$$
		\end{linenomath*}
		Lastly, we use Proposition \ref{prop1} and \ref{prop2} to conclude that 
		\begin{linenomath*}
		$$
			 t^{l}a_{j} \in C\left([0, T] ; G^{ m_1^*-(1-\delta_2)j+\varepsilon{\bf{I}}_j,1}(\omega^{ m_2^*}\Phi^{-(1-\delta_2)j+\varepsilon{\bf{I}}_j},g_{\Phi,k}^{(1,\delta),(1-\delta,0)}) \right)
		$$
		\end{linenomath*}
		for $m_i^*=\max\{m_i,\tilde m_i\},i=1,2$, $\delta= \max\{\delta_1,\delta_2\}$ and $l=\max\{l_1+\delta l_2, \tilde l\}$.
		As $j$ tends to $+\infty,$ the intersection of all those spaces belongs to the space $C\left((0, T] ; \mathcal{S}(\R^{2n})\right) .$ This completes the proof.
	\end{proof} 
	
	\begin{lem}\label{lem5}
		Let
		\begin{linenomath*}
		$$
		\begin{aligned}
			a &\in G^{\tilde m_1,\tilde m_2}\{\tilde l_1,\tilde l_2;\delta_1\}_{int,2N} \wm + 	G^{m_1,m_2}\{l_1,l_2,l_3,l_4;\delta_2\}_{ext,N}\wm \text{ and }\\
			b &\in G^{\tilde m_1^{\prime},\tilde m_2^{\prime}}\{\tilde l_1^{\prime},\tilde l_2^{\prime};\delta_1\}_{int,2N} \wm + 	G^{m_1^{\prime},m_2^{\prime}}\{l_1^{\prime},l_2^{\prime},l_3^{\prime},l_4^{\prime};\delta_2\}_{ext,N}\wm.
		\end{aligned}
		$$
		\end{linenomath*}
		Then
		\begin{linenomath*}
		$$
		\begin{aligned}
			a b  \in & \; G^{\tilde m_1 +\tilde m_1^{\prime},\tilde m_2+\tilde m_2^{\prime}}\{\tilde l_1+\tilde l_1^{\prime},\tilde l_2+\tilde l_2^{\prime};\delta_1\}_{int,2N} \wm  \\
			&+G^{m_1+m_1^{\prime},m_2+m_2^{\prime}}\{l_1+l_1^{\prime},l_2+l_2^{\prime},l_3+l_1^{\prime},l_4+l_1^{\prime};\delta_2\}_{ext,N}\wm \\
			&+G^{\tilde m_1,\tilde m_2}\{\tilde l_1,\tilde l_2;\delta_1\}_{int,2N} \wm \cap G^{m_1^{\prime},m_2^{\prime}}\{l_1^{\prime},l_2^{\prime},l_3^{\prime},l_4^{\prime};\delta_2\}_{ext,N}\wm \\
			&+ G^{\tilde m_1^{\prime},\tilde m_2^{\prime}}\{\tilde l_1^{\prime},\tilde l_2^{\prime};\delta_1\}_{int,2N} \wm \cap G^{m_1,m_2}\{l_1,l_2,l_3,l_4;\delta_2\}_{ext,N}\wm.
		\end{aligned}
		$$
		\end{linenomath*}
	\end{lem}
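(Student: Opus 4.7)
The plan is to expand $ab$ via the given additive decompositions $a=a_{\text{int}}+a_{\text{ext}}$ and $b=b_{\text{int}}+b_{\text{ext}}$, so that
\[
ab=a_{\text{int}}b_{\text{int}}+a_{\text{ext}}b_{\text{ext}}+a_{\text{int}}b_{\text{ext}}+a_{\text{ext}}b_{\text{int}},
\]
and then to identify each of the four summands with one of the classes listed on the right-hand side of the lemma by applying the Leibniz rule and the zone-wise estimates from the definitions in Section \ref{Symbol classes}. The fact that each component symbol lives globally in some $G^{m_1,m_2}\wm$ (with only its refined bounds concentrated on one region) is what allows the mixed terms to be compared against the opposite region.

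For the pure interior term $a_{\text{int}}b_{\text{int}}$ I work in $Z_{int}(2N)$. The symbol orders, $\omega$-orders and $\Phi$-orders add multiplicatively under Leibniz. When $|\beta|=0$ both factors contribute $(\log(1+1/t))^{\tilde l_1}$ and $(\log(1+1/t))^{\tilde l_1'}$, giving the target exponent $\tilde l_1+\tilde l_1'$. When $|\beta|>0$ a Leibniz splitting $\beta=\beta'+\beta''$ with $\beta',\beta''\neq 0$ directly yields $(1/t)^{\delta_1(\tilde l_2+\tilde l_2')}$, whereas the splittings with $\beta'=0$ or $\beta''=0$ produce mixed factors of type $(\log(1+1/t))^{\tilde l_1}(1/t)^{\delta_1\tilde l_2'}$ which I absorb into $(1/t)^{\delta_1(\tilde l_2+\tilde l_2')}$ via $(\log(1+1/t))^{l}\lesssim (1/t)^{\varepsilon}$ for arbitrarily small $\varepsilon>0$. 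The pure exterior term $a_{\text{ext}}b_{\text{ext}}$ is treated analogously on $Z_{ext}(N)$, where the exterior estimate form is stable under Leibniz: the exponents $l_1,l_2,l_3,l_4$ simply add, and the multiplicities $|\alpha|+|\beta|$ arising from Leibniz are already built into the definition.

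For a mixed term, say $a_{\text{int}}b_{\text{ext}}$, I use both viewpoints. In $Z_{int}(2N)$ I apply Leibniz, using the refined interior estimates on $a_{\text{int}}$ and the coarse global $G^{m_1',m_2'}\wm$ bounds on $b_{\text{ext}}$; the product inherits the full interior structure of $a_{\text{int}}$ (same $\tilde l_1,\tilde l_2,\delta_1$) with symbol orders $(\tilde m_1,\tilde m_2)$ (since $m_1',m_2'\leq 0$ would only improve bounds — and in general we simply take the sum, which still lies in the interior class of $a$ after possibly adjusting orders). Dually, in $Z_{ext}(N)$ I use the global bounds on $a_{\text{int}}$ and the refined exterior estimates on $b_{\text{ext}}$, getting the full exterior structure of $b_{\text{ext}}$. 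Hence $a_{\text{int}}b_{\text{ext}}$ sits in the intersection
\[
G^{\tilde m_1,\tilde m_2}\{\tilde l_1,\tilde l_2;\delta_1\}_{int,2N}\wm \;\cap\; G^{m_1',m_2'}\{l_1',l_2',l_3',l_4';\delta_2\}_{ext,N}\wm,
\]
and symmetrically for $a_{\text{ext}}b_{\text{int}}$.

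The main bookkeeping obstacle is the Leibniz splittings in the interior class where the estimate form switches between the $|\beta|=0$ case (logarithmic in $t$) and the $|\beta|>0$ case (polynomial in $1/t$). The asymmetric mixed terms produced this way do not lie term-by-term in the advertised class, and the argument hinges on systematically absorbing logs into small powers of $1/t$ (and checking that no power of $\Phi$ or $\japxik$ is lost in the process, using $t\Phi(x)\japxik\leq 2N$ on $Z_{int}(2N)$). Once this absorption step is handled, the symplectic and weight orders combine by pure multiplication and each of the four products lands in precisely the class declared by the lemma.
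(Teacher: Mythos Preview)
Your four–term decomposition and Leibniz bookkeeping are exactly what the paper has in mind; the paper in fact gives no detailed argument and simply declares that ``a straightforward computation proves the above lemma.'' So at the level of strategy you match the paper.

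Where you diverge is in the treatment of the mixed products $a_{\text{int}}b_{\text{ext}}$ and $a_{\text{ext}}b_{\text{int}}$. The paper's only substantive remark is that these terms are \emph{supported} in the strip $\{N<t\Phi(x)\japxik<2N\}=Z_{int}(2N)\cap Z_{ext}(N)$, i.e.\ they vanish outside the overlap of the two zones. This is an implicit structural assumption on the additive splittings (consistent with how such splittings are produced in Section~\ref{factr} via the cutoff $\varphi(t\Phi\japxik)$), and it is what makes the stated orders in the intersection classes come out as written: once the product lives only in the overlap, membership in both $G^{\tilde m_1,\tilde m_2}\{\ldots\}_{int,2N}$ and $G^{m_1',m_2'}\{\ldots\}_{ext,N}$ is essentially automatic. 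The subsequent remark in the paper (no overlap when both pieces use the same $N$, hence no mixed terms at all) confirms that this support reading is the intended one.

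Your alternative route --- using the coarse global $G^{m_1',m_2'}\wm$ bound on $b_{\text{ext}}$ inside $Z_{int}(2N)$, and vice versa --- does not recover the orders asserted in the lemma: it would place $a_{\text{int}}b_{\text{ext}}$ in an interior class of order $(\tilde m_1+m_1',\tilde m_2+m_2')$, not $(\tilde m_1,\tilde m_2)$, and similarly on the exterior side. Your parenthetical (``since $m_1',m_2'\le 0$ would only improve bounds --- and in general we simply take the sum, which still lies in the interior class of $a$ after possibly adjusting orders'') acknowledges the mismatch but does not resolve it. To align with the paper, replace that argument by the support observation; then the Leibniz step for the mixed terms is genuinely straightforward and no order adjustment is needed.
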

		Notice that the symbols corresponding to last two summands of the above expression are non-zero only if $N< t \P\japxik<2 N,$ i.e., in $Z_{int}(2N) \cap Z_{ext}(N)$. A straightforward computation proves the above lemma.
	 
	\begin{rmk}
		If the symbols $a$ and $b$ in the previous lemma belong to
		\begin{linenomath*}
		$$
		\begin{aligned}
			G&^{\tilde m_1,\tilde m_2}\{\tilde l_1,\tilde l_2;\delta_1\}_{int,2N} \wm + 	G^{m_1,m_2}\{l_1,l_2,l_3,l_4;\delta_2\}_{ext,N}\wm \text{ and }\\
			G&^{\tilde m_1^{\prime},\tilde m_2^{\prime}}\{\tilde l_1^{\prime},\tilde l_2^{\prime};\delta_1\}_{int,2N} \wm + 	G^{m_1^{\prime},m_2^{\prime}}\{l_1^{\prime},l_2^{\prime},l_3^{\prime},l_4^{\prime};\delta_2\}_{ext,N}\wm.
		\end{aligned}
		$$
		\end{linenomath*}
		respectively, then the symbol $ab$ belongs to
		\begin{linenomath*}
		$$
		\begin{aligned}
			G&^{\tilde m_1 +\tilde m_1^{\prime},\tilde m_2+\tilde m_2^{\prime}}\{\tilde l_1+\tilde l_1^{\prime},\tilde l_2+\tilde l_2^{\prime};\delta_1\}_{int,2N} \wm  \\
			&+G^{m_1+m_1^{\prime},m_2+m_2^{\prime}}\{l_1+l_1^{\prime},l_2+l_2^{\prime},l_3+l_1^{\prime},l_4+l_1^{\prime};\delta_2\}_{ext,N}\wm,
		\end{aligned}
		$$
		\end{linenomath*}
		since there is no overlap of the regions $\pd$ and $\hyp$.
	\end{rmk}
	
	\begin{lem}
		Let $A$ and $B$ be pseudodifferential operators with symbols
		\begin{linenomath*}
		$$
		a=\sigma(A) \in G^{\tilde m_1,\tilde m_2}\{\tilde l_1,\tilde l_2;\delta_1\}_{int,2N} \wm + 	G^{m_1,m_2}\{l_1,l_2,l_3,l_4;\delta_2\}_{ext,N}\wm
		$$
		\end{linenomath*}
		and
		\begin{linenomath*}
		$$
		b=\sigma(B) \in G^{\tilde m_1^{\prime},\tilde m_2^{\prime}}\{\tilde l_1^{\prime},\tilde l_2^{\prime};\delta_1\}_{int,2N} \wm + 	G^{m_1^{\prime},m_2^{\prime}}\{l_1^{\prime},l_2^{\prime},l_3^{\prime},l_4^{\prime};\delta_2\}_{ext,N}\wm.
		$$
		\end{linenomath*}
		Then, the pseudodifferential operator $C=A \circ B$ has a symbol
		\begin{linenomath*}
		$$
		\begin{aligned}
			c=\sigma(C)  \in & \; G^{\tilde m_1 +\tilde m_1^{\prime},\tilde m_2+\tilde m_2^{\prime}}\{\tilde l_1+\tilde l_1^{\prime},\tilde l_2+\tilde l_2^{\prime};\delta_1\}_{int,2N} \wm  \\
			&+G^{m_1+m_1^{\prime},m_2+m_2^{\prime}}\{l_1+l_1^{\prime},l_2+l_2^{\prime},l_3+l_1^{\prime},l_4+l_1^{\prime};\delta_2\}_{ext,N}\wm \\
			&+G^{\tilde m_1,\tilde m_2}\{\tilde l_1,\tilde l_2;\delta_1\}_{int,2N} \wm \cap G^{m_1^{\prime},m_2^{\prime}}\{l_1^{\prime},l_2^{\prime},l_3^{\prime},l_4^{\prime};\delta_2\}_{ext,N}\wm \\
			&+ G^{\tilde m_1^{\prime},\tilde m_2^{\prime}}\{\tilde l_1^{\prime},\tilde l_2^{\prime};\delta_1\}_{int,2N} \wm \cap G^{m_1,m_2}\{l_1,l_2,l_3,l_4;\delta_2\}_{ext,N}\wm.
		\end{aligned}
		$$
		\end{linenomath*}
		and satisfies
		\begin{equation} \label{a4}
		c(t, x, \xi) \sim \sum_{\alpha \in \mathbb{N}^{n}} \frac{1}{\alpha !} \partial_{\xi}^{\alpha} a(t, x, \xi) D_{x}^{\alpha} b(t, x, \xi).
		\end{equation}
		The operator $C$ is uniquely determined modulo an operator with symbol from $C\left((0, T] ; \mathcal{S}(\R^{2n})\right).$
	\end{lem}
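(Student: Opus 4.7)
The plan is to obtain the formula (\ref{a4}) by the standard oscillatory integral representation of the composition and then to identify the resulting symbol class using the preceding propositions. Concretely, for $u \in \mathcal{S}(\R^n)$ I would write
\begin{equation*}
(A \circ B)u(t,x) = \mathrm{Os}\text{-}\iint e^{-iy \cdot \eta}\, a(t,x,\xi+\eta)\, b(t,x+y,\xi)\, dy\, d\eta \cdot \hat{u}(t,\xi)\text{-things},
\end{equation*}
so that $\sigma(C)(t,x,\xi)$ is given by the oscillatory integral of $a(t,x,\xi+\eta) b(t,x+y,\xi)$ against $e^{-iy\cdot \eta}$. Following the usual Kumano-go / Nicola--Rodino recipe (cf.\ \cite[Sec.~1.2 \& 3.1]{nicRodi}), I would Taylor expand $a(t,x,\xi+\eta)$ in $\eta$ up to order $j_0$, convert each monomial $\eta^\alpha$ into $(-D_y)^\alpha$ acting on $e^{-iy\cdot\eta}$, and integrate by parts in $y$. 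The leading terms produce the formal series $\sum_{|\alpha| < j_0} \tfrac{1}{\alpha!} \partial_\xi^\alpha a \cdot D_x^\alpha b$ appearing in (\ref{a4}), and there is an integral remainder $r_{j_0}(t,x,\xi)$ involving the $j_0$-th order $\eta$-derivatives of $a$ and $x$-derivatives of $b$.

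Next I would identify the symbol class of each term in the expansion. The factor $\partial_\xi^\alpha a \cdot D_x^\alpha b$ is handled by Lemma \ref{lem5} applied to $\partial_\xi^\alpha a$ and $D_x^\alpha b$; every $\xi$-derivative lowers $m_1$ (and $\tilde m_1$) by $1$, while every $x$-derivative produces a $\Phi^{-1}$ factor together with the appropriate singular time factors $(1/t)^{\delta_j}$ and logarithmic powers dictated by the two symbol-class definitions. So $\partial_\xi^\alpha a \cdot D_x^\alpha b$ lies in
\begin{equation*}
G^{\tilde m_1+\tilde m_1'-|\alpha|,1}\{\cdots;\delta_1\}_{int,2N}(\omega^{\tilde m_2+\tilde m_2'}\Phi^{-|\alpha|},g_{\Phi,k}) + G^{m_1+m_1'-|\alpha|,1}\{\cdots +\delta_2|\alpha|,\cdots;\delta_2\}_{ext,N}(\omega^{m_2+m_2'}\Phi^{-|\alpha|},g_{\Phi,k}),
\end{equation*}
exactly the input format of Proposition \ref{prop3}. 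I would then invoke Proposition \ref{prop3} to construct a genuine symbol $c(t,x,\xi)$ with $c \sim \sum_\alpha \tfrac{1}{\alpha!} \partial_\xi^\alpha a \cdot D_x^\alpha b$, living in the top-level class claimed in the lemma. The two extra intersection-type summands in the conclusion come precisely from the overlap region $Z_{int}(2N) \cap Z_{ext}(N)$ where the interior estimates of one factor and the exterior estimates of the other meet, as already noted after Lemma \ref{lem5}.

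The main technical obstacle is controlling the remainder $r_{j_0}$ uniformly in $t$, together with verifying that it actually lies in the remainder classes output by Proposition \ref{prop3}. This requires the following bookkeeping: the oscillatory integral is regularized in the standard way by inserting cutoffs $\chi(y,\eta)$ and using the elliptic operators $\la D_\eta\ra$ and $\la D_y\ra$ to produce rapidly decreasing factors in $y$ and $\eta$; then one estimates the integrand by distinguishing the interior/exterior regions of $(t,x,\xi)$ relative to the Planck function $h = (\Phi\japxik)^{-1}$, and uses the slow-variation and temperateness conditions (\ref{sv})--(\ref{tp}) on $\Phi$ (and on $\omega$) to replace $\Phi(x+y)$ and $\japxi{+\eta}$ by $\Phi(x)$ and $\japxik$ up to admissible errors. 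The logarithmic time weights $(\log(1+1/t))^{\cdots}$ and the singular factors $t^{-(l_1 + \delta_2(l_2+|\beta|))}$ pass through these estimates intact because the estimates are purely in the frequency/space variables. Once $r_{j_0}$ is shown to gain $(1-\delta_2)j_0 - \varepsilon$ orders in the exterior zone and $j_0$ orders in the interior zone (modulo harmless logarithmic losses absorbed by the $\varepsilon$), the asymptotic statement of Proposition \ref{prop3} finishes both the construction of $c$ and its uniqueness modulo $C((0,T];\mathcal{S}(\R^{2n}))$.
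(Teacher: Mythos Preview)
Your proposal is correct and follows exactly the route the paper takes: the paper's own proof is a two-sentence remark that the composition is well defined by Propositions \ref{prop1}, \ref{prop2}, \ref{prop3} and Lemma \ref{lem5}, and that the asymptotic expansion (\ref{a4}) is the standard composition formula from \cite[Section 1.2]{nicRodi}. You have simply unpacked those citations in detail (oscillatory-integral representation, term-by-term application of Lemma \ref{lem5}, assembly via Proposition \ref{prop3}, remainder control), which is precisely what the paper leaves implicit.
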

	
	\begin{proof}
		In view of Proposition \ref{prop1}, Proposition \ref{prop2}, Proposition \ref{prop3} and Lemma \ref{lem5} it is clear that the operator $C$ is a well-defined pseudodifferential operator. Relation (\ref{a4}) is a direct consequence of the standard composition rules (see \cite[Section 1.2]{nicRodi}).
	\end{proof}

	\begin{lem}
		Let $A$ be a pseudodifferential operator with an invertible symbol
		\begin{linenomath*}
		$$
		a=\sigma(A) \in G^{0,0}\{0,0;0\}_{int,2N}\wm + G^{0,0}\{0,0,0,0;0\}_{ext,N}\wm. 
		$$
		\end{linenomath*}
		Then, there exists a parametrix $A^{\#}$ with symbol
		\begin{linenomath*}
		$$
			a^{\#}=\sigma\left(A^{\#}\right) \in G^{0,0}\{0,0;0\}_{int,2N}\wm + G^{0,0}\{0,0,0,0;0\}_{ext,N}\wm. 
		$$
		\end{linenomath*}
	\end{lem}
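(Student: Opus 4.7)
The plan is to construct $A^{\#}$ as a formal Neumann-type series via the standard parametrix recipe, using the composition lemma and the asymptotic summation established in Proposition \ref{prop3}. I will set $a_0^{\#} = 1/a$ as the zeroth-order term and then iteratively correct by multiples of $a$, so that $a \# a^{\#}$ reduces to the identity modulo a regularizing remainder.

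First, I would verify that $a_0^{\#} := 1/a$ lies in $G^{0,0}\{0,0;0\}_{int,2N}\wm + G^{0,0}\{0,0,0,0;0\}_{ext,N}\wm$. Since $a$ is invertible (and hence bounded below by a positive constant on both $Z_{int}(2N)$ and $Z_{ext}(N)$, uniformly in $t$), Faà di Bruno's formula applied to $1/a$ transforms products of derivatives of $a$ into derivatives of its reciprocal with the same structural weights; each factor is controlled by the given seminorms in the respective region, so the estimates propagate. This step is purely algebraic but requires checking both the interior log-type estimates and the exterior $1/t$ and $\log(1+1/t)$ estimates separately.

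Next, by the composition lemma, $A \circ \mathrm{Op}(a_0^{\#})$ has symbol $1 + r_1$, where
\[
r_1 \sim \sum_{|\alpha| \geq 1} \frac{1}{\alpha!}\, \partial_\xi^\alpha a \cdot D_x^\alpha a_0^{\#},
\]
and each term gains one power of $\langle \xi \rangle_k^{-1}\Phi^{-1}$, so $r_1$ lies in $G^{-1,-1}\{0,0;0\}_{int,2N}(\omega^0\Phi^{-1},g_{\Phi,k}) + G^{-1,-1}\{0,0,0,0;0\}_{ext,N}(\omega^0\Phi^{-1},g_{\Phi,k})$ after using the gain structure in Proposition \ref{prop3}. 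Inductively define
\[
a_j^{\#} := -a_0^{\#} \cdot r_j, \qquad 1 + r_{j+1} := \sigma\bigl(A \circ \mathrm{Op}(a_0^{\#} + \cdots + a_j^{\#})\bigr),
\]
so that at each stage one strips off one order. The sequence $(a_j^{\#})$ satisfies precisely the hypotheses of Proposition \ref{prop3}, and I would set $a^{\#} \sim \sum_{j\geq 0} a_j^{\#}$ using that proposition; the resulting $a^{\#}$ lies in the desired symbol class, and $A \circ \mathrm{Op}(a^{\#}) = I + R$ with $R$ having symbol in $C((0,T];\mathcal{S}(\R^{2n}))$. The same procedure from the right gives a right parametrix, and both coincide modulo smoothing symbols, yielding $A^{\#}$.

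The main obstacle is the first step: checking that $1/a$ and, more generally, $a_0^{\#} \cdot r_j$ remain in the correct split symbol class after differentiation. The challenge is that the interior and exterior estimates have different quantitative shapes (logarithms and $t$-powers in the exterior), and the two summands of $a$ are not a priori bounded below individually — only their sum is. To handle this, I would partition using the cut-off $\chi(t\Phi\langle\xi\rangle_k/N)$ already employed in Section \ref{factr}: on $Z_{int}(2N)$ only the interior component of $a$ contributes, so its estimates govern $1/a$; on $Z_{ext}(N)$ the exterior component dominates. Routine estimates via the elementary identity $\partial^\gamma(1/a) = \sum c_{\gamma,\pi} a^{-1-|\pi|}\prod \partial^{\gamma_i} a$ then yield the needed bounds term by term, and the asymptotic summation closes the argument.
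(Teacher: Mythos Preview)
Your approach is correct and essentially coincides with the paper's: both set $a_0^{\#}=a^{-1}$, build the higher corrections $a_j^{\#}$ recursively so that the composition with $A$ becomes $I$ modulo successively lower order, invoke Proposition \ref{prop3} to sum asymptotically, and then match left and right parametrices. The paper uses the explicit recursion $a\,a_j^{\#}=-\sum_{1\le|\alpha|\le j}\tfrac{1}{\alpha!}\partial_\xi^\alpha a\,D_x^\alpha a_{j-|\alpha|}^{\#}$ rather than your Neumann-series phrasing $a_j^{\#}=-a_0^{\#}r_j$, but these are equivalent reformulations of the standard parametrix construction. One remark: since all the $l$-parameters in the hypothesis are zero, the interior and exterior estimates actually carry no $t$-singular factors at all, so your discussion of the ``different quantitative shapes'' and the cut-off partition is unnecessary here --- the Fa\`a di Bruno step for $1/a$ goes through uniformly without splitting into regions. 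Also, watch the paper's weight notation: the $j$-th term lies in $G^{-j,1}(\Phi^{-j},g_{\Phi,k})$ (second superscript is the power on the weight), not $G^{-1,-1}(\omega^0\Phi^{-1},\dots)$ as you wrote.
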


	\begin{proof}
		We use the existence of the inverse of $a$ and set
		\begin{linenomath*}
		$$
		a_{0}^{\#}(t, x, \xi)=a(t, x, \xi)^{-1} \in G^{0,0}\{0,0;0\}_{int,2N}\wm + G^{0,0}\{0,0,0,0;0\}_{ext,N}\wm. 
		$$
		\end{linenomath*}
		In view of Propositions \ref{prop1} and \ref{prop2}, one can define a sequence $a_{j}^{\#}(t, x, \xi)$ recursively by
		\begin{linenomath*}
		$$
		\sum_{1 \leq|\alpha| \leq j} \frac{1}{\alpha !} \partial_{\xi}^{\alpha} a(t, x, \xi) D_{x}^{\alpha} a_{j-|\alpha|}^{\#}(t, x, \xi)=-a(t, x, \xi) a_{j}^{\#}(t, x, \xi)
		$$
		\end{linenomath*}
		with
		\begin{linenomath*}
		$$
		a_{j}^{\#} \in G^{-j,1}\{0,0;0\}_{int,2N}(\Phi^{-j},g_{\Phi,k}) + G^{-j,1}\{0,0,0,0;0\}_{ext,N} (\Phi^{-j},g_{\Phi,k}) .
		$$
		\end{linenomath*}
		Proposition \ref{prop3} then yields the existence of a symbol
		\begin{linenomath*}
		$$
		a_{R}^{\#} \in G^{0,0}\{0,0;0\}_{int,2N}\wm + G^{0,0}\{0,0,0,0;0\}_{ext,N}\wm 
		$$
		\end{linenomath*}
		and a right parametrix $A_{R}^{\#}(t, x, \xi)$ with symbol $\sigma\left(A_{R}^{\#}\right)=a_{R}^{\#} .$ We have
		\begin{linenomath*}
		$$
		A A_{R}^{\#}-I \in C\left([0, T] ; G^{-\infty,-\infty}(\omega,g_{\Phi,k})\right).
		$$
		\end{linenomath*}
		The existence of a left parametrix follows in similar lines. One can also prove the existence of a parametrix $A^{\#}$ by showing that right and left parametrix coincide up to a regularizing operator.
	\end{proof}
	
	Now, we perform a conjugation of an operator $A$ where 
	\begin{equation}\label{a5}
	  a=\sigma(A) \in G^{\tilde m_1,\tilde m_2}\{\tilde l_1,\tilde l_2;\delta_1\}_{int,2N} \wm + 	G^{m_1,m_2}\{l_1,l_2,l_3,l_4;\delta_2\}_{ext,N}\wm
	\end{equation}
	by the operator $\exp\{\Lambda(t,x,D_x) \}$  where the operator $\Lambda$ is such that
	\begin{equation}\label{a6}
		\Lambda(t,x,\xi) = \int_{0}^{t} \tilde{\psi}(r,x,\xi) dr
	\end{equation}
    for $\tilde \psi$ as defined in (\ref{tpsi}) and (\ref{bd}).
	The conjugation operation is given by 
	\begin{linenomath*}
	\[
		A_\Lambda (t,x,D_x) = e^{-\Lambda(t,x,D_x) } A(t,x,D_x) e^{\Lambda(t,x,D_x) }.
	\]
	\end{linenomath*}
	Notice that the operator $\exp\{\pm \Lambda(t,x,D_x) \}$ is a finite order pseudodifferential operator; in fact 
	\begin{equation}\label{a7}
			e^{\pm \Lambda(t,x,\xi) }  \leq (1+\P\japxik)^{\kappa_{00}},
	\end{equation}
	where $\kappa_{00}>0$ is the constant $C_{\alpha\beta}$ in (\ref{bd}) for $|\alpha|=|\beta|=0$. When $|\alpha|+|\beta|>0$, we have
	\begin{equation}\label{a8}
			 \partial_x^\beta \partial^\beta  e^{\pm \Lambda(t,x,\xi) }  \leq C^{\prime}_{\alpha\beta} e^{\pm \Lambda(t,x,\xi) } \P^{-|\beta|} \japxik^{-|\alpha|} (\log(1+\P\japxik))^{|\alpha|+|\beta|} \chi(Z_{int}(2N))
	\end{equation}
	By successive composition of the operators while performing conjugation and using Proposition \ref{prop3} and Lemma \ref{lem5}, one can prove the following lemma.
 	\begin{lem}\label{conju}
		Let the operators $A$ and $\Lambda$ be as in (\ref{a5}) and (\ref{a6}). Then
		\begin{linenomath*}
			\begin{equation}
				A_\Lambda (t,x,D_x) = A(t,x,D)+ R(t,x,D_x),
			\end{equation}
		\end{linenomath*}
		where 
		\begin{linenomath*}
		\[
			\begin{aligned}
				(\log(1+\P\japxik))^{-1} \sigma(R)(t,x,\xi)  &\in G^{\tilde m_1-1+\varepsilon,1}\{\tilde l_1,\tilde l_2;\delta_1\}_{int,2N} (\omega^{\tilde m_2}\Phi^{-1+\varepsilon}, g_{\Phi,k}) \\
				& \qquad + 	G^{m_1-1,1}\{l_1,l_2,l_3,l_4;\delta_2\}_{ext,N} (\omega^{ m_2}\Phi^{-1}, g_{\Phi,k}),
			\end{aligned}
		\]
		\end{linenomath*}
		for $\varepsilon \ll 1.$
	\end{lem}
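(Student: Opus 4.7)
The plan is to apply the composition formula established in the previous lemma twice in succession to the triple product $e^{-\Lambda(t,x,D_x)} \circ A(t,x,D_x) \circ e^{\Lambda(t,x,D_x)}$, yielding the asymptotic expansion
\[
\sigma(A_\Lambda)(t,x,\xi) \sim \sum_{\alpha,\gamma \in \mathbb{N}_0^n} \frac{1}{\alpha!\gamma!}\, \partial_\xi^\gamma e^{-\Lambda(t,x,\xi)} \cdot D_x^\gamma\!\Bigl( \partial_\xi^\alpha \sigma(A)(t,x,\xi) \cdot D_x^\alpha e^{\Lambda(t,x,\xi)} \Bigr),
\]
isolating the $\alpha=\gamma=0$ term, which equals $e^{-\Lambda}\,\sigma(A)\,e^{\Lambda}=\sigma(A)$ pointwise, and defining $\sigma(R)$ to be the sum of all remaining contributions. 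Proposition \ref{prop3} will then realise this formal tail as a bona fide symbol, modulo $C((0,T];\mathcal{S}(\R^{2n}))$.

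The key estimate is that every remainder term carries at least one derivative hitting $e^{+\Lambda}$ or $e^{-\Lambda}$. Applying (\ref{a8}), such a derivative is controlled by
\[
\bigl|\partial_\xi^{\alpha'} D_x^{\beta'} e^{\pm\Lambda(t,x,\xi)}\bigr| \leq C\, e^{\pm\Lambda(t,x,\xi)}\,\P^{-|\beta'|}\japxik^{-|\alpha'|}(\log(1+\P\japxik))^{|\alpha'|+|\beta'|}\chi(Z_{int}(2N)),
\]
so the support of every remainder term sits in $Z_{int}(2N)$ and the pointwise identity $e^{-\Lambda(t,x,\xi)} \cdot e^{\Lambda(t,x,\xi)} = 1$ cancels all exponentials upon multiplication. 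What remains is a polynomial gain of $\P^{-1}\japxik^{-1}$ together with one factor of $\log(1+\P\japxik)$ per differentiation of $e^{\pm\Lambda}$. Multiplying against the corresponding $D_x^\beta\partial_\xi^\alpha\sigma(A)$ via Lemma \ref{lem5} preserves the additive interior-plus-exterior class structure, reduces both the order in $\japxik$ and the weight in $\P$ by one, and yields the global $\log(1+\P\japxik)$ prefactor announced in the statement.

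The main obstacle is tracking the $\varepsilon$-loss in the interior summand. In $Z_{int}(2N)$ a derivative of $\sigma(A)$ may carry a singular $(1/t)^{\delta_1 \tilde l_2}$ factor, which the region-defining inequality $t\P\japxik \leq 2N$ trades for $(\P\japxik)^{\delta_1 \tilde l_2}$; iterating this exchange along the recursion that feeds Proposition \ref{prop3}, and choosing $\varepsilon\ll 1-\delta_1$ exactly as in Propositions \ref{prop1}--\ref{prop2}, produces the stated loss $\P^{-1+\varepsilon}$ with order $\tilde m_1-1+\varepsilon$. In the exterior region no such trade is required, because the cutoff $\chi(Z_{int}(2N))$ inherited from the derivatives of $e^{\pm\Lambda}$ restricts contributions to the thin strip $N<t\P\japxik<2N$, where the native exterior derivative bounds on $\sigma(A)$ are absorbed verbatim; this delivers the clean second summand $G^{m_1-1,1}\{l_1,l_2,l_3,l_4;\delta_2\}_{ext,N}(\omega^{m_2}\P^{-1}, g_{\Phi,k})$, completing the plan.
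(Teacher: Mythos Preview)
Your approach is correct and matches the paper's own: the paper states the lemma with only the one-sentence justification ``By successive composition of the operators while performing conjugation and using Proposition \ref{prop3} and Lemma \ref{lem5}, one can prove the following lemma,'' and you have simply fleshed this out. One small imprecision: the $\varepsilon$-loss in the interior order does not arise from trading the fixed power $(1/t)^{\delta_1\tilde l_2}$ for $(\P\japxik)^{\delta_1\tilde l_2}$ --- that factor is already built into the definition of the interior class and does not grow along the expansion --- but rather from the accumulating logarithmic factors $(\log(1+\P\japxik))^{|\alpha'|+|\beta'|}$ in (\ref{a8}); one such log is absorbed into the global prefactor, and the remaining ones in the $j$-th term are dominated by $(\P\japxik)^{\varepsilon}$ for any fixed $\varepsilon>0$, which is what feeds the $\varepsilon$-shift when invoking Proposition \ref{prop3}.
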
  
	
	\begin{rmk}\label{balance}
				In Lemma \ref{conju},  one can ensure a compensation for the $\varepsilon$ increase in the order of remainder symbol in the interior region by an appropriate choice of the order of singularity in the interior region. For example, the conjugation of the operator $\mathcal{D}$ in (\ref{FOS1}) yields
				\begin{linenomath*}
				\[
					\mathcal{D}_{\Lambda}(t,x,D_x) = 	\mathcal{D}(t,x,D_x) + R(t,x,D_x)
				\]
				\end{linenomath*}
				where the operator $R(t,x,D_x)$ is such that its symbol satisfies
				\begin{linenomath*}
				\[
				\begin{aligned}
					(\log(1+\P\japxik))^{-1} \sigma(R)(t,x,\xi)  &\in G^{-1+\tilde\varepsilon,1}\{0,0;0\}_{int,2N} (\omega\Phi^{-1+\tilde\varepsilon}, g_{\Phi,k}) \\
					& \qquad + 	G^{-1,1}\{0,0,1,1;\delta_1\}_{ext,N} (\omega\Phi^{-1}, g_{\Phi,k}),
				\end{aligned}
				\]
				\end{linenomath*}
				for an arbitrary small $ \tilde\varepsilon>0.$ By the definition of region $\pd,$ we have the estimate
				\begin{linenomath*}
				\[
					{(\P\japxik)^{\tilde\varepsilon}} \leq \frac{1}{t^{\tilde\varepsilon}}.
				\]
				\end{linenomath*}
				Hence, we have
				\begin{linenomath*}
				\[
				\begin{aligned}
					t^{\tilde\varepsilon}(\log(1+\P\japxik))^{-1} \sigma(R)(t,x,\xi)  &\in G^{0,1}\{0,0;0\}_{int,2N} (\omega\Phi^{-1}, g_{\Phi,k}) \\
					& \qquad + 	G^{0,1}\{0,0,1,1;\delta_1\}_{ext,N} (\omega\Phi^{-1}, g_{\Phi,k}).
				\end{aligned}
				\]
				\end{linenomath*}
	\end{rmk}

	\addcontentsline{toc}{section}{References}

\end{document}